\renewcommand{\binom}{\genfrac{\lgroup}{\rgroup}{0pt}{1}}
\newcommand{\verticalbinomial}{\genfrac{|}{|}{0pt}{1}}
\newcommand{\LieAlg}{\mathbf{Lie}_R}
\newcommand{\Grp}{\mathbf{Grp}}
\newcommand{\XMod}{\mathbf{XMod}}
\newcommand{\Pt}{\mathbf{Pt}}
\newcommand{\Act}{\mathbf{Act}}
\newcommand{\A}{\mathbb{A}}
\newcommand{\SH}{{\rm (SH)}}
\newcommand{\noproof}{\hfill\qed}
\renewcommand{\MR}[1]{}
\theoremstyle{plain} 
\newtheorem{thm}[subsection]{Theorem}
\newtheorem{cor}[subsection]{Corollary} 
\newtheorem{lemma}[subsection]{Lemma} 
\newtheorem{prop}[subsection]{Proposition}
\theoremstyle{definition} 
\newtheorem{defi}[subsection]{Definition}
\theoremstyle{remark}
\newtheorem{rmk}[subsection]{Remark}
\newtheorem{ex}[subsection]{Example}
\title{Compatible actions in semi-abelian categories}
\author{Davide di Micco}
\address[Davide di Micco]{Università degli Studi di Milano, Via Saldini 50, 20133 Milano, Italy}
\email[Davide di Micco]{davide.dimicco@unimi.it}
\author{Tim Van~der Linden}
\address[Tim Van~der Linden]{Institut de
Recherche en Math\'ematique et Physique, Universit\'e catholique
de Louvain, che\-min du cyclotron~2 bte~L7.01.02, B--1348
Louvain-la-Neuve, Belgium}
\email[Tim Van~der Linden]{tim.vanderlinden@uclouvain.be}
\thanks{The second author is a Research
Associate of the Fonds de la Recherche Scientifique--FNRS}
\date{\today}
\begin{document}
\begin{abstract}
The concept of a pair of compatible actions was introduced in the case of groups by Brown and Loday~\cite{BL84} and in the case of Lie algebras by Ellis~\cite{Ell91}. In this article we extend it to the context of semi-abelian categories (that satisfy the \emph{Smith is Huq} condition). We give a new construction of the Peiffer product, which specialises to the definitions known for groups and Lie algebras. We use it to prove our main result, on the connection between pairs of compatible actions and pairs of crossed modules over a common base object. We also study the Peiffer product in its own right, in terms of its universal properties, and prove its equivalence with existing definitions in specific cases.
\end{abstract}

\keywords{Semi-abelian category, pair of compatible actions, internal action, crossed module, Peiffer product, non-abelian tensor product.}

\subjclass[2010]{18D35, 18E10, 20J15}

\maketitle

\section*{Introduction}

The concept of a pair of compatible actions was first introduced in the category of groups by Brown and Loday, in relation to their work on the non-abelian tensor product of groups~\cite{BL84}. Later, in~\cite{Ell91}, the definition was adapted to the context of Lie algebras, where it was further studied in~\cite{Khm99,DiM19}. Since then, several other particular instances of compatible actions have been defined, in various settings: see for example~\cite{Gne99,CaKhPR16,CaKhPR17}. The aim of this article is to provide a general definition in semi-abelian categories (in the sense of~\cite{JMT02}), in a way that extends these as special cases. In particular this will give us the tools to develop a unified theory, in such a way that computing the non-abelian tensor product of compatible actions is the same as computing the non-abelian tensor product of internal crossed modules. This process generalises the diverse particular notions of non-abelian tensor product that appear in the literature so far.

With this idea in mind, we first examine the case of groups from a new perspective, aiming to use a diagrammatic and internal approach whenever this is possible. To do so, we take advantage of the equivalence between group actions in the usual sense and internal actions (introduced in~\cite{Bourn-Janelidze:Semidirect, BJK05}) in the category $\Grp$, as well as the equivalence between crossed modules of groups and internal crossed modules in $\Grp$ (see~\cite{Jan03}). Thus we may separate properties which are specific for groups from those that are purely categorical.

The conditions which we single out in the category $\Grp$ in terms of the internal action formalism become our general definition of \lq\lq a pair of compatible actions\rq\rq. This definition makes sense as soon as the surrounding category is semi-abelian. However, we shall always assume that the so-called \emph{Smith is Huq condition} \SH\ holds as well. This is a relatively mild condition which excludes loops, for instance, but includes all categories of groups with operations; see~\cite{MFVdL12, CGVdL15b}. This simplifies our work, since under \SH\ internal crossed modules allow an easier description~\cite{Jan03, MFVdL12}.

Our main tool is an extension, to the semi-abelian context, of the \emph{Peiffer product} $M\bowtie N$ of two objects $M$ and $N$ acting on each other (via an action $\xi^N_M$ of $N$ on $M$ and an action $\xi^M_N$ of $M$ on $N$). A notion of Peiffer product has already been introduced in~\cite{CMM17}, in the special case of a pair of internal precrossed modules over a common base object. Ours, however, is a different approach, and a priori the two notions do not coincide. Our definition is a direct generalisation of the group and Lie algebra versions of the Peiffer product, which were originally introduced respectively in~\cite{Whi41} and in~\cite{Ell91}. It is well defined as soon as the two objects $M$ and $N$ act on each other, whereas for the definition in~\cite{CMM17} they also need to satisfy some compatibility conditions. Moreover, when the actions $\xi^N_M$ and $\xi^M_N$ are compatible, the Peiffer product $M\bowtie N$ is endowed with internal crossed module structures $(M\xrightarrow{l_M}M\bowtie N,\xi_M^{M\bowtie N})$ and $(N\xrightarrow{l_N}M\bowtie N,\xi_N^{M\bowtie N})$.

We use this as an ingredient in the generalisation of a result, stated in~\cite{BL84} for groups and in~\cite{Khm99} for Lie algebras, to any semi-abelian category that satisfies the condition \SH. We show namely that two objects $M$ and $N$ act on each other compatibly if and only if there exists a third object $L$ endowed with two internal crossed module structures $(M\xrightarrow{\mu}L,\xi^L_M)$ and $(N\xrightarrow{\nu}L,\xi^L_N)$. Amongst other things, this allows us to deduce that our definition of compatibility for pairs of internal actions restricts to the classical definitions for groups and Lie algebras. Another consequence of this result is that the non-abelian tensor product introduced in the forthcoming article~\cite{dMVdL19.3} has two natural interpretations: either as a tensor product of compatible internal actions, or equivalently as a tensor product of crossed modules over a common base object.

Finally, we study the Peiffer product via its universal properties. We also prove that, under the additional hypothesis of algebraic coherence~\cite{CGVdL15b}, our definition of the Peiffer product coincides with the one given in~\cite{CMM17}.

\subsection*{Structure of the text}
The paper is organised as follows. 
In the first section we collect preliminary definitions and results on internal actions in semi-abelian categories. We recall the definitions of the bifunctors $\flat$ and $\diamond$ as well as some related constructions. For a given semi-abelian category $\A$, we describe the category of points in $\A$ and the category of internal actions in $\A$, together with the equivalence between the two. 

In Section~\ref{Section groups} we examine the concept of a pair of compatible actions in the category of groups. First we consider the definition of compatibility given in~\cite{BL84} and we translate it into its diagrammatic form. Then we construct the Peiffer product as a coequaliser and we prove that it coincides with the definition already known for the case of groups. In Proposition~\ref{prop:remark 2.16 in BL} we prove a result stated in~\cite{BL84}, namely that two groups $M$ and $N$ act on each other compatibly if and only if there exists a third group $L$ endowed with two crossed module structures $(M\xrightarrow{\mu}L,\xi^L_M)$ and $(N\xrightarrow{\nu}L,\xi^L_N)$.

Section~\ref{Section Semiabelian} contains our main results. We work in the context of a semi\-abelian category $\A$ that satisfies the \emph{Smith is Huq condition} \SH. We express the definition of compatibility in this general context and show in Proposition~\ref{prop:coterminal crossed module induce compatible actions} that whenever we have a pair of internal crossed modules over a common base object, they induce a pair of compatible internal actions. 

Then we construct the Peiffer product of two internal actions in three distinct ways: first we imitate what happens in the case of groups, constructing the Peiffer product for each pair of objects acting on each other. In Proposition~\ref{prop:Peiffer is a pushout of semidirect products} we prove that this is the same as taking the pushout of the two semi-direct products. Then we give a more specific definition that requires the actions to be compatible. Finally, we show in Proposition~\ref{prop:q=q_S} that, if the compatibility conditions are satisfied, then the two definitions coincide. 

We prove in Proposition~\ref{prop:crossed module structures of the Peiffer product} that whenever the actions are compatible, their Peiffer product is automatically endowed with internal crossed module structures $(M\xrightarrow{l_M}M\bowtie N,\xi_M^{M\bowtie N})$ and $(N\xrightarrow{l_N}M\bowtie N,\xi_N^{M\bowtie N})$. This leads to Theorem~\ref{thm:compatible actions iff pair of coterminal crossed module}, which is a generalisation to semi-abelian categories of Proposition~\ref{prop:remark 2.16 in BL}, proven for groups in the previous section: two objects $M$ and $N$ act on each other compatibly if and only if there exists a third object $L$ endowed with two internal crossed module structures $(M\xrightarrow{\mu}L,\xi^L_M)$ and $(N\xrightarrow{\nu}L,\xi^L_N)$. Via this result we obtain Corollary~\ref{cor:in grp the internal defi of compatible action is the same as the particular one} and Corollary~\ref{cor:in LieAlg the internal defi of compatible action is the same as the particular one} as confirmations of the equivalence between our general definition of compatibility and the specific ones in the cases of groups and Lie algebras.

We conclude the paper with a study of the Peiffer product via its universal properties (Section~\ref{Section Peiffer}, in particular Proposition~\ref{Universal property} and Proposition~\ref{prop:Peiffer is a pushout of semidirect products}). Here we also prove that, under the additional hypothesis of algebraic coherence~\cite{CGVdL15b}, our definition of the Peiffer product coincides with the one given in~\cite{CMM17}. Via results in~\cite{CMM17}, this further implies that under an additional condition called (UA), the actions induced by two $L$-crossed module structures have a Peiffer product which is again an $L$-crossed module; furthermore, it is the coproduct in $\XMod_L(\A)$ of the given $L$-crossed modules. This generalises Proposition~3.4 in~\cite{DiM19}.

\section{Preliminaries on internal actions}
In what follows, we let $\A$ be a semi-abelian category~\cite{JMT02}: pointed, Barr exact, Bourn protomodular with binary coproducts. This concept unifies earlier attempts (including, for instance,~\cite{Huq, Gerstenhaber, Orzech}) at providing a categorical framework that extends the context of abelian categories to include non-additive categories of algebraic structures such as groups, Lie algebras, loops, rings, etc. In this setting, the basic lemmas of homological algebra---the \emph{$3\times 3$ Lemma}, the \emph{Short Five Lemma}, the \emph{Snake Lemma}---hold~\cite{Bou01,BB04}. 

The category of internal actions $\Act(\A)$ and the category of internal crossed modules $\XMod(\A)$ in any semi-abelian category $\A$ are again semi-abelian. We recall their definitions.

\begin{defi}\cite{Bou03}
A \emph{regular pushout} is a commutative square of regular epimorphisms as on the left
\[
\xymatrix@R=5ex@C=3em{
A \ar[r]^-f \ar[d]_-\alpha &
 B \ar[d]^-\beta \\
A' \ar[r]_-{f'} &
 B'
}
\qquad\qquad
\xymatrix@R=5ex@C=3em{
A'\times_{B'}B \ar[r]^-{\pi_{B}} \ar[d]_-{\pi_{A'}} &
 B \ar[d]^-\beta \\
A' \ar[r]_-{f'} &
 B'
}
\]
such that the comparison map $\langle\alpha,f\rangle\colon A\to A'\times_{B'}B$ to the induced pullback square on the right is a regular epimorphism as well. 
\end{defi}

It is well known that in a semi-abelian category, a commutative square of regular epimorphisms is a regular pushout if and only if it is a pushout. In fact this characterises semi-abelian categories amongst finitely cocomplete homological categories (in the sense of~\cite{BB04}: pointed, regular, protomodular). Regular pushouts can be recognised as follows:

\begin{lemma}\cite{Bou03}
\label{lemma:ker of reg pushout is reg epi}
Consider a square of regular epimorphisms in a homological category and take kernels to the left as in the diagram
\[
\xymatrix{
K_f \ar@{.>}[d]_-{k} \ar@{{ |>}.>}[r]^-{k_f} & A \ar@{-{ >>}}[r]^-{f} \ar@{-{ >>}}[d]_-{\alpha} & B \ar@{-{ >>}}[d]^-{\beta}\\
K_{f'} \ar@{{ |>}.>}[r]_-{k_{f'}} & A' \ar@{-{ >>}}[r]_-{f'} & B'.
}
\]
The induced map $k$ is a regular epimorphism if and only if the given square is a regular pushout.\noproof
\end{lemma}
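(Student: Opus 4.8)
The plan is to realise the two implications as instances of, respectively, the pullback-stability of regular epimorphisms and the Short Five Lemma. Write $P\coloneqq A'\times_{B'}B$ for the pullback appearing in the definition, with projections $\pi_{A'}\colon P\to A'$ and $\pi_B\colon P\to B$, and let $c\coloneqq\langle\alpha,f\rangle\colon A\to P$ be the comparison morphism, so that by definition the given square is a regular pushout precisely when $c$ is a regular epimorphism. Since $f'$ and $\beta$ are regular epimorphisms and these are stable under pullback in a regular category, $\pi_{A'}$ and $\pi_B$ are regular epimorphisms; moreover there is a canonical isomorphism $\ker\pi_B\cong K_{f'}$ under which the monomorphism $\ker\pi_B\to P$ followed by $\pi_{A'}$ becomes $k_{f'}$. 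As $\pi_Bc=f$ and $\pi_{A'}c=\alpha$, the restriction of $c$ to the kernels of $f$ and of $\pi_B$ is exactly the induced map $k$, and so we obtain a morphism of short exact sequences from $(K_f\to A\to B)$ to $(K_{f'}\to P\to B)$ whose components are $k$, $c$ and $\mathrm{id}_B$.

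For the implication ``regular pushout $\Rightarrow$ $k$ a regular epimorphism'' I would argue as follows. If $c$ is a regular epimorphism, then the square with vertices $K_f$, $A$, $K_{f'}$, $P$ is a pullback---indeed $\ker f=\ker(\pi_Bc)=A\times_P\ker\pi_B$---so $k$, being the pullback of the regular epimorphism $c$ along the monomorphism $K_{f'}\to P$, is a regular epimorphism.

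For the converse, assume $k$ is a regular epimorphism and take the (regular epimorphism, monomorphism)-factorisation $c=m\circ e$, with $e\colon A\to I$ a regular epimorphism and $m\colon I\to P$ a monomorphism. Since $\pi_B\circ m\circ e=f$ is a regular epimorphism, so is $\pi_B\circ m$; let $K_I\to I$ denote its kernel. Restricting $e$ and $m$ to kernels yields a regular epimorphism $K_f\to K_I$ (it is a pullback of $e$ along $K_I\to I$) and a monomorphism $K_I\to K_{f'}$, whose composite is $k$; since $k$ is a regular epimorphism, the monomorphism $K_I\to K_{f'}$ is itself a regular epimorphism, hence an isomorphism. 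Applying the Short Five Lemma to the resulting morphism of short exact sequences from $(K_I\to I\to B)$ to $(K_{f'}\to P\to B)$, whose outer components are isomorphisms, shows that $m$ is an isomorphism; thus $c=m\circ e$ is a regular epimorphism and the square is a regular pushout.

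The main obstacle is the bookkeeping in the first step: one has to verify that the isomorphism $\ker\pi_B\cong K_{f'}$ is compatible with the relevant structure maps, so that the morphism induced on kernels really is the canonical map $k$. Once the morphism between the two short exact sequences is in place, both directions are short, using only two elementary facts valid in any homological category---that a morphism which becomes a regular epimorphism after precomposition with some morphism is itself a regular epimorphism, and that a morphism which is at once a monomorphism and a regular epimorphism is an isomorphism.
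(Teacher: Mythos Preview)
Your argument is correct. The paper itself does not prove this lemma: it is stated with a citation to Bourn~\cite{Bou03} and marked \verb|\noproof|, so there is no ``paper's own proof'' to compare against. What you have written is essentially the standard argument one finds in the literature: reduce to the comparison map $c\colon A\to P=A'\times_{B'}B$, identify $\ker\pi_B$ with $K_{f'}$, and obtain a morphism of short exact sequences
\[
\xymatrix{
0 \ar[r] & K_f \ar[r] \ar[d]_-{k} & A \ar[r]^-{f} \ar[d]_-{c} & B \ar[r] \ar@{=}[d] & 0\\
0 \ar[r] & K_{f'} \ar[r] & P \ar[r]_-{\pi_B} & B \ar[r] & 0.
}
\]
One direction is then pullback-stability of regular epimorphisms, and for the converse you factor $c=m\circ e$ and use the Short Five Lemma to force $m$ to be an isomorphism. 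All the ingredients you invoke (pullback-stability of regular epimorphisms, the (regular epi, mono)-factorisation, the Short Five Lemma, and the fact that regular epimorphisms are detected after precomposition) are available in any homological category.

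One cosmetic remark: the square with vertices $K_f$, $A$, $K_{f'}$, $P$ is a pullback \emph{unconditionally}, not only when $c$ is a regular epimorphism; your justification $\ker f=\ker(\pi_B c)=A\times_P\ker\pi_B$ already shows this. The phrasing ``if $c$ is a regular epimorphism, then the square \dots\ is a pullback'' is therefore slightly misleading, though it does no harm to the argument.
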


\subsection{The bifunctor \texorpdfstring{$\flat$}{-b-}}
For an object $A$ in a semi-abelian category $\A$, internal $A$-actions are defined as algebras over a certain monad $A\flat(-)$. 

\begin{defi}
The bifunctor $\flat\colon \A\times\A\to\A$ is defined on objects as the kernel
\[
\xymatrix{
A\flat B \ar@{{ |>}->}[r]^-{k_{A,B}} & A+B \ar[r]^-{\binom{1_A}{0}} & A.
}
\]
Using the universal property of kernels, its behaviour on arrows is determined by 
\[
\xymatrix{
A\flat B \ar@{.>}[d]_-{f\flat g} \ar@{{ |>}->}[r]^-{k_{A,B}} & A+B \ar[d]_-{f+g} \ar[r]^-{\binom{1_A}{0}} & A \ar[d]^-{f}\\
A'\flat B' \ar@{{ |>}->}[r]_-{k_{A',B'}} & A'+B' \ar[r]_-{\binom{1_{A'}}{0}} & A'.
}
\]
\end{defi}

\begin{ex}
\label{ex:bemolle in Grp}
In the category $\Grp$ the coproduct $A+B$ is the so-called \emph{free product} of $A$ and $B$, the group freely generated by the disjoint union of $A$ and $B$, modulo the relations that hold in $A$ or in $B$. This means that an element in $A+B$ can be represented as a word obtained by juxtaposition of elements in $A$ and in $B$. Then it is easy to deduce that $A\flat B$ is the subgroup of $A+B$ whose elements are represented by the words of the form $a_1b_1\cdots a_nb_n$ such that $a_1\cdots a_n=1\in A$. Furthermore, it can be shown that each word in $A\flat B$ can be written as a juxtaposition of formal conjugations, that is
\[
A\flat B=\langle aba^{-1}\mid a\in A, b\in B\rangle.
\]
The following example expresses the idea of the proof, which easy generalises to any word in $A\flat B$.
\begin{align*}
a_1b_1a_2b_2a_3b_3&=(a_1b_1a_1^{-1})(a_1a_2b_2a_2^{-1}a_1^{-1})(a_1a_2a_3)b_3\\
&=(a_1b_1a_1^{-1})(a_1a_2b_2(a_1a_2)^{-1})1(1b_31^{-1})
\end{align*}
\end{ex}

\begin{rmk}
For any fixed object $A\in\A$, the triple $(A\flat(-),\eta^A,\mu^A)$ is a monad, where for $A$, $B\in\A$ we define $\eta^A_B\colon B\to A\flat B$ as in 
\[
\xymatrix{
B \ar@{.>}[d]_-{\eta^A_B} \ar[rd]^-{i_B}\\
A\flat B \ar@{{ |>}->}[r]_-{k_{A,B}} & A+B \ar[r]^-{\binom{1_A}{0}} & A
}
\]
and $\mu^A_B\colon A\flat(A\flat B)\to A\flat B$ as in
\[
\xymatrix{
A\flat(A\flat B) \ar@{.>}[d]_-{\mu^A_B} \ar@{{ |>}->}[r]^-{k_{A,A\flat B}} & A+(A\flat B) \ar[d]^-{\binom{i_A}{k_{A,B}}} \ar[r]^-{\binom{1_A}{0}} & A \ar@{=}[d]\\
A\flat B \ar@{{ |>}->}[r]_-{k_{A,B}} & A+B \ar[r]_-{\binom{1_A}{0}} & A.
}
\]
\end{rmk}

\begin{lemma}
\label{lemma:regular epis are bemolle-stable}
In a semi-abelian category, consider regular epimorphisms $\alpha\colon A\to A'$ and $\beta\colon B\to B'$. Then both $\alpha+\beta$ and $\alpha\flat\beta$ are regular epimorphisms as well.
\end{lemma}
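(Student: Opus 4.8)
The statement splits into two parts: I will first show that $\alpha+\beta$ is a regular epimorphism, and then derive the case of $\alpha\flat\beta$ from this together with Lemma~\ref{lemma:ker of reg pushout is reg epi}.

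For $\alpha+\beta$ the quickest argument is to observe that the coproduct functor $\A\times\A\to\A$ is left adjoint to the diagonal $\A\to\A\times\A$, hence preserves all colimits; since colimits in $\A\times\A$ are computed componentwise, the pair $(\alpha,\beta)$ is a coequaliser in $\A\times\A$ (of the pair obtained from presentations of $\alpha$ and $\beta$ as coequalisers in $\A$), and applying the coproduct functor exhibits $\alpha+\beta$ as a coequaliser in $\A$, i.e.\ as a regular epimorphism.

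For $\alpha\flat\beta$, consider the diagram
\[
\xymatrix{
A\flat B \ar[d]_-{\alpha\flat\beta} \ar[r]^-{k_{A,B}} & A+B \ar[d]^-{\alpha+\beta} \ar[r]^-{\binom{1_A}{0}} & A \ar[d]^-{\alpha}\\
A'\flat B' \ar[r]_-{k_{A',B'}} & A'+B' \ar[r]_-{\binom{1_{A'}}{0}} & A'
}
\]
defining $\alpha\flat\beta$, whose rows are short exact sequences. The right-hand square consists of regular epimorphisms: $\alpha$ by hypothesis, $\alpha+\beta$ by the previous paragraph, and $\binom{1_A}{0}$, $\binom{1_{A'}}{0}$ because they are split by the coproduct injections $i_A$, $i_{A'}$. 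I claim this square is moreover a pushout. Given $p\colon A\to X$ and $q\colon A'+B'\to X$ with $p\circ\binom{1_A}{0}=q\circ(\alpha+\beta)$, precomposing with the injection $i_B\colon B\to A+B$ gives $0=q\circ i_{B'}\circ\beta$, hence $q\circ i_{B'}=0$ because $\beta$ is an epimorphism, so $q=(q\circ i_{A'})\circ\binom{1_{A'}}{0}$; precomposing the same equality with $i_A$ gives $p=(q\circ i_{A'})\circ\alpha$. Thus $q\circ i_{A'}\colon A'\to X$ is a factorisation of the cocone, and it is the only one since $\binom{1_{A'}}{0}$ is an epimorphism. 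By the characterisation recalled in the preliminaries, a commutative square of regular epimorphisms in the semi-abelian category $\A$ is a pushout precisely when it is a regular pushout; hence the right-hand square is a regular pushout, and Lemma~\ref{lemma:ker of reg pushout is reg epi} gives that the induced morphism between the kernels of its horizontal arrows is a regular epimorphism. By the definition of $\flat$ on morphisms, this induced morphism is exactly $\alpha\flat\beta$.

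The only point that needs a little care is the verification that the right-hand square is a pushout, and in particular the spot where one uses that $\beta$ is an epimorphism in order to kill the component $q\circ i_{B'}$ of a competing cocone. Everything else is a direct application of the adjunction argument, of the characterisation of regular pushouts among squares of regular epimorphisms, and of Lemma~\ref{lemma:ker of reg pushout is reg epi}.
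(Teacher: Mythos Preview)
Your proof is correct and follows essentially the same route as the paper's: both arguments first show $\alpha+\beta$ is a coequaliser (you via the left-adjoint argument, the paper by writing down the explicit coequaliser pair, which amounts to the same thing), and then both verify that the right-hand square is a pushout in order to invoke Lemma~\ref{lemma:ker of reg pushout is reg epi} together with the pushout/regular-pushout equivalence for squares of regular epimorphisms. Your explicit verification of the pushout property is exactly the ``direct verification'' the paper alludes to.
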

\begin{proof}
The first statement is easily shown checking that, if $\alpha=coeq(x_1,x_2)$ and $\beta=coeq(y_1,y_2)$, then $\alpha+\beta=coeq(x_1+y_1,x_2+y_2)$. For what regards the second statement we build the diagram
\[
\xymatrixcolsep{3pc}
\xymatrix{
A\flat B \ar@{{ |>}->}[r]^-{k_{A,B}} \ar[d]_-{\alpha\flat\beta} & A+B \ar@{-{ >>}}[d]^-{\alpha+\beta} \ar@{-{ >>}}[r]^-{\binom{1_A}{0}} & A \ar@{-{ >>}}[d]^-{\alpha}\\
A'\flat B' \ar@{{ |>}->}[r]_-{k_{A',B'}} & A'+B' \ar@{-{ >>}}[r]_-{\binom{1_{A'}}{0}} & A'.
}
\]
Thanks to Lemma~\ref{lemma:ker of reg pushout is reg epi} it suffices to show that the right-hand square is a pushout in order to obtain that $\alpha\flat\beta$ is a regular epimorphism as well. This is easy to do by direct verification of the universal property of pushouts.
\end{proof}

\subsection{The cosmash product \texorpdfstring{$\diamond$}{}}
Cosmash products~\cite{Smash} may be used to define commutators~\cite{MM10, HVdL11} and may help expressing properties of internal actions. We start by exploring the relationship with $\flat$.

\begin{defi}
Given two objects $A$ and $B$ in $\A$, consider the map 
\[
\Sigma_{A,B}=\binom{\langle 1_A,0\rangle}{\langle0,1_B\rangle}=\langle \binom{1_A}{0},\binom{0}{1_B}\rangle\colon A+B \longrightarrow A\times B.
\]
Since $\A$ is semi-abelian, the morphism $\Sigma_{A,B}$ is a regular epimorphism. By taking its kernel we find the short exact sequence
\[
\xymatrixcolsep{3pc}
\xymatrix{
0 \ar[r] & A\diamond B \ar@{{ |>}->}[r]^-{h_{A,B}} & A+B \ar[r]^-{\Sigma_{A,B}} & A\times B \ar[r] & 0
}
\]
where $A\diamond B$ is called the \emph{cosmash product} of $A$ and $B$.
\end{defi}

\begin{rmk}
\label{rmk:cosmash and bemolle}
Notice that the inclusion of $A\diamond B$ into $A+B$ factors through $A\flat B$, because the latter is the kernel of $\binom{1_A}{0}\colon A+B\to A$. Moreover we have another split short exact sequence involving the cosmash product, namely
\begin{equation}\label{Cosmash as kernel of bemol}
\xymatrix{
0 \ar[r] & A\diamond B \ar@{{ |>}->}[r]^-{i_{A,B}} & A\flat B \ar@<2pt>[r]^-{\tau^A_B} & B \ar[r] \ar@<2pt>[l]^-{\eta^A_B} & 0
}
\end{equation}
where $\tau^A_B\coloneq \binom{0}{1_B}\circ k_{A,B}$ is the trivial action of $A$ on $B$.
This can be seen by constructing the $3\times 3$ diagram
\[
\xymatrix{
& 0 \ar[d] & 0 \ar[d] & 0 \ar[d]\\
0 \ar[r] & A\diamond B \ar@{.>}[rd]|-{h_{A,B}} \ar@{{ |>}->}[r]^-{i_{A,B}} \ar@{{ |>}->}[d]_-{i_{B,A}} & A\flat B \ar@{-{ >>}}[r]^-{\tau^A_B} \ar@{{ |>}->}[d]^-{k_{A,B}} & B \ar[r] \ar@{=}[d] & 0\\
0 \ar[r] & B\flat A \ar@{{ |>}->}[r]_-{k_{B,A}} \ar@{-{ >>}}[d]_-{\tau^B_A} & A+B \ar@{-{ >>}}[r]^-{\binom{0}{1_B}} \ar@{-{ >>}}[d]_-{\binom{1_A}{0}} & B \ar[r] \ar[d] & 0\\
0 \ar[r] & A \ar@{=}[r] \ar[d] & A \ar[r] \ar[d] & 0 \ar[r] \ar[d] & 0\\
& 0 & 0 & 0
}
\]
from the bottom-right square by taking kernels, and then by noticing that the top-left object is the kernel of the comparison morphism from $A+B$ to the pullback induced by the lower-right square: since this morphism is precisely $\Sigma_{A,B}$, its kernel is $A\diamond B$.

Moreover the upper left square is a pullback and hence $A\diamond B$ can be seen as the intersection of the subobjects $A\flat B$ and $B\flat A$ of $A+B$. Furthermore, since $\A$ is semi-abelian, in the split short exact sequence~\eqref{Cosmash as kernel of bemol} the morphisms $i_{A,B}$ and $\eta^A_B$ are jointly extremally epimorphic. Thus we obtain the regular epimorphism
\[
\xymatrix{
(A\diamond B)+B \ar@{-{ >>}}[r]^-{\binom{i_{A,B}}{\eta^A_B}} & A\flat B.
}
\]
\end{rmk}

\begin{lemma}
\label{lemma:-bX preserves coequalisers of reflexive graphs}
Let $X$ be an object in a semi-abelian category $\A$. Then the functor $(-)\flat X\colon\A\to\A$ preserves coequalisers of reflexive graphs.
\end{lemma}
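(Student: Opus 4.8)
The plan is to compare $(-)\flat X$ with $\mathrm{id}_\A$ and with $(-)+X$, both of which preserve coequalisers of reflexive graphs: for $\mathrm{id}_\A$ this is clear, and for $(-)+X$ one notes that a morphism out of $A_0+X$ coequalises $(d_0+1_X,d_1+1_X)$ precisely when its $A_0$-component (its restriction along the coproduct injection) coequalises $(d_0,d_1)$, the $X$-component being arbitrary. Moreover there is a natural short exact sequence
\[
\xymatrix{0\ar[r] & (-)\flat X \ar[r]^-{k_{-,X}} & (-)+X \ar[r]^-{\binom10} & \mathrm{id}_\A \ar[r] & 0}
\]
whose quotient morphism is a \emph{split} epimorphism of functors, with section the coproduct injection. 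Now fix a reflexive graph $(d_0,d_1,s)$ on $A_1\rightrightarrows A_0$, write $q\colon A_0\to Q$ for its coequaliser, and let $q_\flat\colon A_0\flat X\to Q_\flat$ be the coequaliser of the reflexive graph $(d_0\flat 1_X,d_1\flat 1_X,s\flat 1_X)$ obtained by applying $\flat X$. Since $q\flat 1_X$ coequalises the latter graph, there is a unique $\bar q\colon Q_\flat\to Q\flat X$ with $\bar q\circ q_\flat=q\flat 1_X$; the claim is that $\bar q$ is invertible. As $\A$ is regular, $q$ is a regular epimorphism, hence so is $q\flat 1_X$ by Lemma~\ref{lemma:regular epis are bemolle-stable}, and therefore so is $\bar q$; so it remains to prove that $\bar q$ is a monomorphism, i.e. that $\ker(q\flat 1_X)=\ker(q_\flat)$ as (normal) subobjects of $A_0\flat X$.

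I would use two standard ingredients. First, a short diagram chase using protomodularity of $\A$ — the kernel of a split epimorphism together with its section being jointly strongly epimorphic — shows that the coequaliser of any reflexive graph $(d_0,d_1,s)$ is the cokernel of the connecting morphism $d_1\circ\ker(d_0)$; and the image of this connecting morphism is normal, being the direct image of the normal subobject $\ker(d_0)$ along the split epimorphism $d_1$, so the kernel of the coequaliser is exactly that image. Applied to the three reflexive graphs above — writing $\partial\coloneq d_1\circ\ker(d_0)$, $\partial_+\coloneq(d_1+1_X)\circ\ker(d_0+1_X)$ and $\partial_\flat\coloneq(d_1\flat 1_X)\circ\ker(d_0\flat 1_X)$ — this gives $\ker(q)=\mathrm{im}(\partial)$, $\ker(q+1_X)=\mathrm{im}(\partial_+)$ (using that $(-)+X$ preserves the coequaliser, so that $q+1_X$ is the coequaliser of $(d_0+1_X,d_1+1_X,s+1_X)$) and $\ker(q_\flat)=\mathrm{im}(\partial_\flat)$. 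Second, feeding the map of split short exact sequences with vertical morphisms $q\flat 1_X$, $q+1_X$, $q$ — all regular epimorphisms by Lemma~\ref{lemma:regular epis are bemolle-stable} — into the $3\times 3$ Lemma produces a short exact sequence $0\to\ker(q\flat 1_X)\to\ker(q+1_X)\to\ker(q)\to0$; hence $\ker(q\flat 1_X)=\ker(q+1_X)\cap(A_0\flat X)=\mathrm{im}(\partial_+)\cap(A_0\flat X)$. So everything reduces to showing $\mathrm{im}(\partial_+)\cap(A_0\flat X)=\mathrm{im}(\partial_\flat)$.

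This last identity, which exploits the concrete structure of $\flat$, is where I expect the real work to lie. In the regular category $\A$ one has $\mathrm{im}(\partial_+)\cap(A_0\flat X)=\partial_+\bigl(\partial_+^{-1}(A_0\flat X)\bigr)$; and since $\binom10\circ\partial_+=\partial\circ\rho$, where $\rho\colon\ker(d_0+1_X)\to\ker(d_0)$ is the induced morphism — a split epimorphism, with section the restriction of the coproduct injection $A_1\hookrightarrow A_1+X$, and with kernel $\ker(d_0\flat 1_X)$ — it follows that $\partial_+^{-1}(A_0\flat X)=\rho^{-1}(\ker\partial)$. Here $\ker\partial$ is the double kernel $\ker\langle d_0,d_1\rangle$, so the same restricted coproduct injection gives a morphism $\ker\partial\to\ker(d_0+1_X)$ splitting $\rho$ over $\ker\partial$; consequently $\rho^{-1}(\ker\partial)$ is the join of $\ker\rho=\ker(d_0\flat 1_X)$ with this copy of $\ker\partial$. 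Finally, $\partial_+$ annihilates that copy of $\ker\partial$ — because $d_1$ vanishes on $\ker\langle d_0,d_1\rangle$ — while on $\ker(d_0\flat 1_X)$ it has image $\mathrm{im}(\partial_\flat)$ by naturality of the kernel inclusions $k_{-,X}$; so $\partial_+(\rho^{-1}(\ker\partial))=\mathrm{im}(\partial_\flat)$, as required. Organising this join computation and verifying these compatibilities is the main obstacle; once it is done, $\ker(q\flat 1_X)=\mathrm{im}(\partial_\flat)=\ker(q_\flat)$, and $\bar q$ is an isomorphism.
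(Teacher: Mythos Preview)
Your argument is correct, but it takes a route quite different from the paper's. The paper exploits the \emph{other} split short exact sequence attached to $\flat$, namely
\[
0 \longrightarrow A\diamond X \xrightarrow{\ i_{A,X}\ } A\flat X \xrightarrow{\ \tau^A_X\ } X \longrightarrow 0,
\]
and imports from~\cite{HVdL11v1} the fact that the cosmash functor $(-)\diamond X$ preserves coequalisers of reflexive graphs. Then the argument is short: in the induced $3\times 3$ diagram the right-hand column is the identity on $X$, so the top-right square is a pullback, hence a pushout, and the universal property of $q\flat 1_X$ follows at once from that of $q\diamond 1_X$.

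By contrast, you use the sequence $0\to A\flat X\to A+X\to A\to 0$ and the elementary fact that $(-)+X$ preserves all colimits; the price is the longer kernel/image computation in your last paragraph, which hinges on two genuinely semi-abelian ingredients you correctly identify: that the coequaliser of a reflexive graph is the cokernel of its connecting morphism (protomodularity), and that the regular image of a kernel along a split epimorphism is again a kernel (this is where Barr-exactness enters, via the regular-pushout characterisation in Lemma~\ref{lemma:ker of reg pushout is reg epi}). Your approach is therefore more self-contained---it avoids the external cosmash result---while the paper's is shorter but shifts the work into the cited corollary. Both are valid; it is worth noting that the paper's pushout trick works so cleanly only because the quotient in its sequence is the constant functor $X$, whereas in your sequence the quotient is the identity functor, which forces the more hands-on comparison of kernels.
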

\begin{proof}
Consider a reflexive graph with its coequaliser
\[
\xymatrixcolsep{3pc}
\xymatrix{
A \ar@<4pt>[r]^-{d} \ar@<-4pt>[r]_-{c} & B \ar[l]|-{e} \ar@{-{ >>}}[r]^-{q} & Q
}
\]
and the induced diagram
\[
\xymatrix{
A\diamond X \ar@{{ |>}->}[d]_-{i_{A,X}} \ar@<2pt>[r]^-{d\diamond 1_X} \ar@<-2pt>[r]_-{c\diamond 1_X} & B\diamond X \ar@{{ |>}->}[d]_-{i_{B,X}} \ar@{-{ >>}}[r]^-{q\diamond 1_X} & Q\diamond X \ar@{{ |>}->}[d]^-{i_{Q,X}}\\
A\flat X \ar@{-{ >>}}[d]_-{\tau^A_X} \ar@<2pt>[r]^-{d\flat 1_X} \ar@<-2pt>[r]_-{c\flat 1_X} & B\flat X \ar@{-{ >>}}[d]_-{\tau^B_X} \ar@{-{ >>}}[r]^-{q\flat 1_X} & Q\flat X \ar@{-{ >>}}[d]^-{\tau^Q_X}\\
X \ar@<2pt>[r]^-{1_X} \ar@<-2pt>[r]_-{1_X} & X \ar@{-{ >>}}[r]_-{1_X} & X
}
\]
By using Corollary~2.27 in~\cite{HVdL11v1} we know that $q\diamond 1_X$ is again the coequaliser of $d\diamond 1_X$ and $c\diamond 1_X$. We already know that $q\flat 1_X$ is a regular epimorphism by Lemma~\ref{lemma:regular epis are bemolle-stable} and that $(q\flat 1_X)\circ (d\flat 1_X)=(q\flat 1_X)\circ (c\flat 1_X)$, so it remains to show the universal property. 

First of all, by examining the squares on the right, we can see that they form a horizontal morphism of vertical short exact sequences, and since $1_X$ is an isomorphism, we conclude that the top square is a pullback. This implies that it is also a pushout: indeed when we take kernels horizontally we obtain an induced isomorphism between them, which in turn implies that the given square is a pushout.

Now suppose that there exists a morphism $z\colon B\flat X\to Z$ such that $z\circ (d\flat 1_X)=z\circ (c\flat 1_X)$. Then $z\circ i^A_X\circ (d\diamond 1_X)=z\circ i^A_X\circ (c\diamond 1_X)$ and hence there is a unique morphism $\phi\colon Q\diamond X\to Z$ such that $\phi\circ (q\diamond 1_X)=z\circ i^A_X$. Our claim now follows from the universal property of the pushout.
\end{proof}

\subsection{The ternary cosmash product}
Following~\cite{Higgins}, in~\cite{HVdL11} Hartl and Van der Linden define the $n$-ary version of the cosmash product. We are interested in the ternary case, and in some relations between it and the binary case.

\begin{defi}
Given three objects $A$, $B$ and $C$ in $\A$, consider the map 
\[
\Sigma_{A,B,C}=
\begin{pmatrix}
i_A & i_A & 0 \\
i_B & 0 & i_B \\
0 & i_C & i_C
\end{pmatrix}
\colon A+B+C \longrightarrow (A+B)\times(A+C)\times(B+C).
\]
Its kernel is written
\[
\xymatrix{
A\diamond B\diamond C \ar@{{ |>}->}[rr]^-{h_{A,B,C}} && A+B+C
}
\]
and it is called the \emph{ternary cosmash product} of $A$, $B$ and $C$. Like in the binary case, it is obvious that, up to isomorphism, the ternary cosmash product does not depend on the order of the objects.
\end{defi}
In~\cite{HVdL11} the authors define folding operations linking cosmash products of different arities: for our purposes we only need to recall one of them.
\begin{defi}
Given two objects $A$ and $B$ we can construct a map 
\[
S^{A,B}_{2,1}\colon A\diamond A\diamond B\to A\diamond B
\]
through the diagram
\[
\xymatrixcolsep{3pc}
\xymatrix{
A\diamond A\diamond B \ar@{.>}[d]_-{S^{A,B}_{2,1}} \ar@{{ |>}->}[r]^-{h_{A,A,B}} & A+A+B \ar[d]^-{\binom{1_A}{1_A}+1_B} \ar[r]^-{\Sigma_{A,A,B}} & (A+A)\times(A+B)\times(A+B) \ar[d]^-{\binom{1_A}{1_A}\times \bigl(\binom{0}{1_B}\circ \pi_i\bigr)}\\
A\diamond B \ar@{{ |>}->}[r]_-{h_{A,B}} & A+B \ar[r]_-{\Sigma_{A,B}} & A\times B.
}
\]
\end{defi}

We need a map between $(A+B)\flat C$ and the ternary cosmash product $A\diamond B\diamond C $.
\begin{defi}
Consider the object $(A+B)\flat C$ and define the map $j_{A,B,C}$ as in the diagram
\[
\xymatrix@!0@R=3.5em@C=12em{
A\diamond B\diamond C \ar@{.>}[d]_-{j_{A,B,C}} \ar@{{ |>}->}[rd]^-{h_{A,B,C}}\\
(A+B)\flat C \ar@{{ |>}->}[r]_-{k_{(A+B),C}} & A+B+C \ar[r]^-{\binom{1_{A+B}}{0}} \ar[rd]_-{\Sigma_{A,B,C}} & A+B\\
&& (A+B)\times(A+C)\times(B+C). \ar[u]_-{\pi_1}
}
\]
In particular, if $A=B$, then we have the commutative diagram
\[
\xymatrixcolsep{3pc}
\xymatrix{
A\diamond A\diamond C \ar@{{ |>}->}@/^2pc/[rr]^-{h_{A,A,B}} \ar[d]_-{S^{A,C}_{2,1}} \ar[r]^-{j_{A,A,C}} & (A+A)\flat C \ar@{{ |>}->}[r]^-{k_{(A+A),C}} \ar[d]^-{\binom{1_A}{1_A}\flat 1_C} & A+A+C \ar[d]^-{\binom{1_A}{1_A}+1_C}\\
A\diamond C \ar@{{ |>}->}@/_2pc/[rr]_-{h_{A,B}} \ar@{{ |>}->}[r]_-{i_{A,C}} & A\flat C \ar@{{ |>}->}[r]_-{k_{A,C}} & A+C.
}
\]
\end{defi}

\begin{lemma}
\label{lemma:covering the bemolle of a coproduct with some cosmash products}
It is possible to cover the object $(A+B)\flat C$ with the three components $(A\diamond B\diamond C)$, $(A\flat C)$ and $(B\flat C)$.
\end{lemma}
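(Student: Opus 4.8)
The statement asserts that the canonically induced morphism
\[
\binom{j_{A,B,C}}{\binom{i_A\flat 1_C}{i_B\flat 1_C}}\colon (A\diamond B\diamond C)+(A\flat C)+(B\flat C)\longrightarrow (A+B)\flat C,
\]
where $i_A\colon A\to A+B$ and $i_B\colon B\to A+B$ are the coproduct inclusions, is a regular epimorphism; equivalently, these three morphisms form a jointly extremally epimorphic family. My plan is first to absorb the ``degenerate part'' and then to reduce everything to a statement purely about cosmash products. Applying Remark~\ref{rmk:cosmash and bemolle} to the pair $(A+B,C)$, the object $(A+B)\flat C$ is covered by $(A+B)\diamond C$ along $i_{A+B,C}$ and by $C$ along $\eta^{A+B}_C$. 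Naturality of $\eta$ in the acting variable gives $\eta^{A+B}_C=(i_A\flat 1_C)\circ\eta^A_C$, so the $C$-component already factors through $A\flat C$. Since $i_{A,C}$ and $i_{B,C}$ embed $A\diamond C$ and $B\diamond C$ into $A\flat C$ and $B\flat C$, and since the defining diagrams make the composites $A\diamond C\to A\flat C\xrightarrow{i_A\flat 1_C}(A+B)\flat C$, $B\diamond C\to B\flat C\xrightarrow{i_B\flat 1_C}(A+B)\flat C$ and $A\diamond B\diamond C\xrightarrow{j_{A,B,C}}(A+B)\flat C$ factor through $(A+B)\diamond C\xrightarrow{i_{A+B,C}}(A+B)\flat C$ (one checks $\Sigma_{A+B,C}=\phi\circ\Sigma_{A,B,C}$ for $\phi=\pi_1\times(\binom{0}{1_C}\circ\pi_2)$, so $A\diamond B\diamond C\subseteq(A+B)\diamond C$ inside $A+B+C$), it suffices to prove the cosmash statement
\[
(A+B)\diamond C \ \text{ is covered by }\ A\diamond C,\ B\diamond C \ \text{ and }\ A\diamond B\diamond C. \tag{$\ast$}
\]

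For $(\ast)$ I would rely on the decomposition of the kernel of a comparison map to a product in terms of iterated cosmash products~\cite{Smash,HVdL11}: writing $K=\ker\bigl(A+B+C\to A\times B\times C\bigr)$, the object $K$ is covered by $A\diamond B$, $A\diamond C$, $B\diamond C$ and $A\diamond B\diamond C$. Factoring this comparison through $\Sigma_{A+B,C}\colon (A+B)+C\to (A+B)\times C$ (which is a regular epimorphism) exhibits $(A+B)\diamond C$ as a \emph{normal} subobject of $K$ with $K/(A+B)\diamond C\cong A\diamond B$, the retraction $r\colon K\to A\diamond B$ being the restriction of $\binom{1_{A+B}}{0}\colon A+B+C\to A+B$ and the section being $A\diamond B\hookrightarrow A+B\hookrightarrow A+B+C$ (which meets $(A+B)\diamond C=\ker\Sigma_{A+B,C}$ trivially, since $A+B\to(A+B)\times C$ is a split monomorphism). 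Now $r$ kills each of $A\diamond C$, $B\diamond C$, $A\diamond B\diamond C$ (as $\binom{1_{A+B}}{0}$ factors through the relevant $\Sigma$'s), so these three subobjects lie in $\ker r=(A+B)\diamond C$; together with $A\diamond B$ they cover $K$; and because $(A+B)\diamond C$ is normal in $K$ and the complementary section $A\diamond B$ meets it trivially, an internal semidirect-product argument (equivalently, the relative modular law $N\cap(V\vee S)=V\vee(N\cap S)$, valid here since $N=(A+B)\diamond C$ is normal and $V\subseteq N$) forces $A\diamond C\vee B\diamond C\vee A\diamond B\diamond C=(A+B)\diamond C$, which is $(\ast)$.

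The main obstacle is precisely $(\ast)$. Either one invokes the nontrivial cosmash decomposition of $\ker(A+B+C\to A\times B\times C)$ and then performs the subobject-lattice manipulation with some care --- the cancellation used is \emph{not} available for arbitrary subobjects, only because $(A+B)\diamond C$ is normal in $K$ --- or else one proves $(\ast)$ directly by a diagram chase, constructing the comparison $(A\diamond C)+(B\diamond C)+(A\diamond B\diamond C)\to (A+B)\diamond C$ explicitly and recognising it as the kernel map of a regular pushout via Lemma~\ref{lemma:ker of reg pushout is reg epi}, in the spirit of the proof of Lemma~\ref{lemma:regular epis are bemolle-stable}. The latter route replaces the external citation by a (longer) verification of a pushout universal property but keeps the argument self-contained; in either case, once $(\ast)$ is in place the reduction above closes the proof.
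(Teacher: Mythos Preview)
Your reduction of the lemma to the cosmash statement
\[
(\ast)\qquad (A+B)\diamond C\ \text{is covered by}\ A\diamond C,\ B\diamond C\ \text{and}\ A\diamond B\diamond C
\]
is correct and is exactly the paper's approach: the paper packages the same reduction as a single commutative square whose top, left and bottom arrows are regular epimorphisms (the top and bottom by Remark~\ref{rmk:cosmash and bemolle}, the left by~$(\ast)$ plus a fold map), so that the right arrow---the comparison in question---is forced to be one as well. For~$(\ast)$ itself the paper does not argue: it simply cites Lemma~2.12 of~\cite{HVdL11}, which is precisely that statement.

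Your attempted proof of~$(\ast)$, on the other hand, has a genuine gap. The ``relative modular law'' $N\cap(V\vee S)=V\vee(N\cap S)$ for $N$ normal and $V\subseteq N$ is \emph{false} in general, already in~$\Grp$. Take $K=A_4$, $N=V_4$ the Klein four-group, $S=\langle(123)\rangle$ and $V=\langle(12)(34)\rangle\subseteq N$: then $N\cap S=1$ and $V\vee S=A_4$ (conjugating $(12)(34)$ by powers of $(123)$ already produces all of~$V_4$), yet $V\neq N$. So even granting that $A\diamond C$, $B\diamond C$, $A\diamond B\diamond C$ and $A\diamond B$ jointly cover $K=\ker\bigl(A+B+C\to A\times B\times C\bigr)$, and that $0\to(A+B)\diamond C\to K\to A\diamond B\to 0$ is split with the first three summands landing in the kernel, you cannot cancel the $A\diamond B$ summand and deduce that the remaining three cover $(A+B)\diamond C$. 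Normality of $N$ alone does not buy you this cancellation. Your alternative route---recognising the comparison as the kernel map of a regular pushout via Lemma~\ref{lemma:ker of reg pushout is reg epi}---is the right instinct, but as written it is only a plan; the argument in~\cite{HVdL11} for~$(\ast)$ is more involved than a single application of that lemma.
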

\begin{proof}
By Lemma~2.12 in~\cite{HVdL11} we know that there is a regular epimorphism of the form
\[
\xymatrix{
(A\diamond B\diamond C)+(A\diamond C)+(B\diamond C) \ar@{-{ >>}}[rr]^-{e} && (A+B)\diamond C.
}
\]
Using Remark~\ref{rmk:cosmash and bemolle} we are able to construct the square
\[
\scalebox{0.9}{
\xymatrixrowsep{3pc}
\xymatrixcolsep{5pc}
\xymatrix{
(A\diamond B\diamond C)+(A\diamond C)+(B\diamond C)+C+C \ar@{-{ >>}}[r]^-{1+\binom{i_{A,C}}{\eta^A_C}+\binom{i_{B,C}}{\eta^B_C}} \ar@{-{ >>}}[d]_-{e+\binom{1_C}{1_C}} & (A\diamond B\diamond C)+(A\flat C)+(B\flat C) \ar@{-{>}}[d]^-{
 \left\lgroup
 \begin{smallmatrix}
 j_{A,B,C}\\
 i_A\flat 1_C\\
 i_B\flat 1_C	
 \end{smallmatrix}
 \right\rgroup
}\\
((A+B)\diamond C) + C \ar@{-{ >>}}[r]_-{\binom{i_{(A+B),C}}{\eta^{A+B}_C}} & (A+B)\flat C
}
}
\]
from which we see that the vertical map on the right is a regular epimorphism.
\end{proof}

\subsection{The categories \texorpdfstring{$\Pt(\A)$}{Pt(A)} and \texorpdfstring{$\Act(\A)$}{Act(A)}}

In semi-abelian categories there is a concept of internal action, which via a semi-direct product construction is equivalent to the concept of a point---a split epimorphism with a chosen splitting. 

\begin{defi}
A \emph{point} $(p,s)$ in $\A$ is a split epimorphism $p$ with a chosen splitting~$s$, that is $p\colon A\to B$ and $s\colon {B\to A}$ such that $p\circ s=1_B$. A morphism of points $(p,s)\to (p',s')$ is given by a pair of vertical maps $(f,g)$ such that the two squares formed by parallel morphisms
\[
\xymatrix{
A \ar[d]_-{f} \ar@<2pt>[r]^-{p} & B \ar@<2pt>[l]^-{s} \ar[d]^-{g}\\
A' \ar@<2pt>[r]^-{p'} & B' \ar@<2pt>[l]^-{s'}
}
\]
commute. $\Pt(\A)$ is the category of points in $\A$ and morphisms between them. Since the codomain of $p$ is $B$, the point $(p,s)$ is also called a \emph{point over $B$}.
\end{defi}

Having described the category of points, we now shift to internal actions, whose category is equivalent to the former whenever the base category $\A$ is semi-abelian.

\begin{defi}
An \emph{internal action of $A$ on $X$} (or simply \emph{$A$-action} or \emph{action}) in~$\A$ is a triple $(A,X,\xi)$ with $\xi\colon A\flat X\to X$ a map in $\A$ such that $(X,\xi)$ is an algebra for the monad $A\flat(-)\colon\A\to\A$. A morphism of actions from $(A,X,\xi)$ to $(A',X',\xi')$ is given by a pair $(f,g)$ of maps in $\A$, with $f\colon A\to A'$ and $g\colon X\to X'$, such that the following diagram commutes:
\[
\xymatrix{
A\flat X \ar[r]^{f\flat g} \ar[d]_-{\xi} & A'\flat X' \ar[d]^-{\xi'}\\
X \ar[r]_-{g} & X'
} 
\]
The category of actions and morphisms between them is denoted by $\Act(\A)$.
\end{defi}

\begin{ex}
If we fix $\A=\Grp$ we find that internal actions coincide with the usual group actions. Indeed due to Example~\ref{ex:bemolle in Grp}, in order to define such an internal action $\xi\colon A\flat X\to X$ it suffices to specify where the elements of the form $axa^{-1}$ are sent, since they generate the whole subgroup $A\flat X$. Now an internal action~$\xi$ corresponds to the group action $\psi\colon A\times X\to X$ given by $\psi(a,x)\coloneq \xi(axa^{-1})$. Conversely, starting from a group action $\psi$ we define $\xi\colon A\flat X\to X$ on the generators by $\xi(axa^{-1})\coloneq \psi(a,x)$. It is easy to show that these are actions in the appropriate sense. ($\xi$ being a morphism and the axioms for it to be an internal action amount to the group action axioms for the function $\psi$.) The correspondence just depicted determines an equivalence between internal actions in $\Grp$ and group actions.
\end{ex}

\begin{rmk}
Whenever the base category $\A$ is semi-abelian we have an equivalence of categories $\Pt(\A)\simeq\Act(\A)$. The functor $\Pt(\A)\to\Act(\A)$ sends a point $(p\colon {A\to B}, s\colon {B\to A})$ to the action $(B,K_p,\xi)$, where $\xi$ is the unique morphism making the diagram 
\begin{align*}
\xymatrix{
B\flat K_p \ar@{.>}[d]_-{\xi} \ar[r]^-{k_{B,K_p}} & B+K_p \ar[d]^-{\binom{s}{k_p}} \ar[r]^-{\binom{1_B}{0}} & B\ar@{=}[d] \\
K_p \ar@{{ |>}->}[r]_-{k_p} & A \ar[r]_-{p} & B
}
\end{align*}
commute. The functor $\Act(\A)\to \Pt(\A)$ sends an action $(A,X,\xi)$
 to the point
\[
\xymatrix{
X\rtimes_{\xi} A \ar@<2pt>[r]^-{\pi_\xi} & A \ar@<2pt>[l]^-{i_\xi}
}
\]
where the \emph{semi-direct product} $X\rtimes_{\xi} A$ is defined as the coequaliser
\[
\xymatrixcolsep{4pc}
\xymatrixrowsep{3pc}
\xymatrix{
A\flat X \ar@<4pt>[r]^-{i_X\circ\xi} \ar@<-1pt>[r]_-{k_{A,X}} & A+X \ar[r]^{\sigma_{\xi}} & X\rtimes_{\xi} A,
}
\]
the map $\pi_\xi$ is the unique map such that 
\[
\xymatrix{
A+X \ar[r]^-{\sigma_{\xi}} \ar[rd]_-{\pi_{A,X}} & X\rtimes_{\xi} A \ar@{.>}[d]^-{\pi_{\xi}}\\
& A
}
\]
commutes, and finally $i_\xi=\sigma_\xi\circ i_A$. We will sometimes write $X\rtimes_{\xi} A$ as $X\rtimes A$, when there is no risk of confusion regarding the action involved.
Notice that the map 
\[
k\coloneq \sigma_{\xi}\circ i_X\colon X\to X\rtimes_{\xi} A 
\]
is the kernel of $\pi_{\xi}$: it is easy to see that $\pi_{\xi}\circ k=0$, whereas for the universal property some work needs to be done---see, for instance, \cite{MFS12}. 
\end{rmk}

\begin{ex}
The \emph{trivial action} $(A,X,\tau)$ is given by 
\[
\tau=\binom{0}{1_X}\circ k_{A,X}\colon A\flat X\to X.
\]
Then we have that $(X\rtimes_{\tau} A,\sigma_{\tau}) \cong Coeq(i_X\circ(\binom{0}{1_X}\circ k_{A,X}),k_{A,X})$. Both $\binom{1_A}{0}$ and $\binom{0}{1_X}$ coequalise these two maps, so (following the example of the trivial action in~$\Grp$) a first guess would be that 
\[
Coeq(i_X\circ\binom{0}{1_X}\circ k_{A,X},k_{A,X})\cong (A\times X,\langle\binom{1_A}{0},\binom{0}{1_X}\rangle). 
\]
In order to prove this, we may use the equivalence $\Pt(\A)\simeq \Act(\A)$. In particular we claim that the desired point is given by $(\pi_A\colon A\times X\to A, \langle 1_A,0\rangle\colon A\to A\times X)$ and hence it suffices to show that $\tau=\binom{0}{1_X}\circ k_{A,X}$ makes the diagram
\begin{align*}
\xymatrix{
A\flat X \ar[d]_-{\tau} \ar[r]^-{k_{A,X}} & A+X \ar[d]^-{\binom{\langle 1_A,0\rangle}{\langle 0,1_X\rangle}} \\
X \ar[r]_-{\langle 0,1_X\rangle} & A\times X
}
\end{align*}
commute. This is done by direct and easy calculations.
\end{ex}

\begin{ex}
The \emph{conjugation action} $(A,A,\chi_A)$ is given by
\[
\chi_A=\binom{1_A}{1_A}\circ k_{A,A}\colon A\flat A\to A.
\]
Then we have that $(A\rtimes_{\chi_A} A,\sigma_{\chi_A}) \cong Coeq(i_2\circ(\binom{1_A}{1_A}\circ k_{A,A}),k_{A,A})$. Both $\binom{1_A}{0}$ and $\binom{1_A}{1_A}$ coequalise these two maps, so a first guess (again following the example of~$\Grp$) would be that 
\[
Coeq(i_2\circ\binom{1_A}{1_A}\circ k_{A,A},k_{A,A})\cong (A\times A,\langle\binom{1_A}{0},\binom{1_A}{1_A}\rangle). 
\]
In order to prove this, we use the same strategy as in the previous example.
\end{ex}

\begin{rmk}
Notice that, by the definition of the semi-direct product, it is easy to show that the diagram on the left
\[
\xymatrix{
A\flat X \ar@{{ |>}->}[r]^-{k_{A,X}} \ar[d]_-{\xi} & A+X \ar[d]^-{\sigma_{\xi}}\\
X \ar[r]_-{k_{\pi_{\xi}}} & X\rtimes_{\xi}A
}
\qquad\qquad
\xymatrixcolsep{4pc}
\xymatrix{
A\flat X \ar[d]_-{i_{\xi}\flat k_{\pi_{\xi}}} \ar[r]^-{\xi} & X \ar[d]^-{k_{\pi_{\xi}}}\\
(X\rtimes_{\xi}A)\flat(X\rtimes_{\xi}A) \ar[r]_-{\chi_{(X\rtimes_{\xi}A)}} & X\rtimes_{\xi}A
}
\]
is a pushout. Thanks to this commutativity we can show that also the square on the right commutes, which means that \lq\lq computing an action\rq\rq\ is the same as \lq\lq computing the conjugation in the induced semi-direct product\rq\rq.
\end{rmk}

\begin{rmk}
\label{rmk:xi(f flat 1) is an action}
Notice that, if $(B,X,\xi\colon B\flat X\to X)$ is an action and $f\colon A\to B$ is any map, then also $(A,X,\xi\circ(f\flat 1_X)\colon A\flat X\to X)$ is an action. Indeed the diagrams
\begin{align*}
\xymatrix{
X \ar@{=}[d] \ar[r]^-{\eta^A_X} & A\flat X \ar[d]^-{f\flat 1_X}\\
X \ar@{=}[rd] \ar[r]^-{\eta^A_X} & B\flat X \ar[d]^-{\xi}\\
& X
}
&&
\xymatrix{
A\flat(A\flat X) \ar[rr]^-{\mu^A_X} \ar[rd]|-{f\flat(f\flat 1_X)} \ar[d]_-{1_A\flat(f\flat 1_X)} && A\flat X \ar[d]^-{f\flat 1_X}\\
A\flat(B\flat X) \ar[r]^-{f\flat 1_{B\flat X}} \ar[d]_-{1_A\flat\xi} \ar[rd]|-{f\flat\xi} & B\flat(B\flat X) \ar[d]^-{1_B\flat\xi} \ar[r]^-{\mu^B_X} & B\flat X \ar[d]^-{\xi}\\
A\flat X \ar[r]_-{f\flat 1_X} & B\flat X \ar[r]_-{\xi} & X
}
\end{align*}
commute. The action $\xi\circ(f\flat 1_X)$ is often called \emph{pullback action} of $\xi$ along $f$ and the reason is the following. Consider the diagram
\[
\xymatrixcolsep{3pc}
\xymatrix{
X \ar@{=}[d] \ar@{{ |>}->}[r]^-{k_{\pi_{\xi'}}} & X\rtimes_{\xi'} A \ar[d]|-{1_X\rtimes f} \ar@<2pt>[r]^-{\pi_{\xi'}} & A \ar[d]^-{f} \ar@<2pt>[l]^-{\sigma_{\xi'}}\\
X \ar@{{ |>}->}[r]_-{k_{\pi_{\xi}}} & X\rtimes_{\xi} B \ar@<2pt>[r]^-{\pi_{\xi}} & B \ar@<2pt>[l]^-{\sigma_{\xi}}
}
\]
where the bottom row is the point associated to $\xi$, whereas the first row is obtained taking the pullback of $\pi_{\xi}$ along $f$. Then it is easy to see that the action $\xi'$ coincides with $\xi\circ(f\flat 1_X)$.
\end{rmk}

\begin{rmk}
In order to recover a point over $B$, in general slightly less is needed than a $B\flat(-)$-algebra structure. Every time we have an action $\xi\colon A\flat X\to X$ we can construct the corresponding \emph{action core} ${}^{\diamond}\xi\colon A\diamond X\to X$ as the composition of $\xi$ and $i_{A,X}\colon {A\diamond X\to A\flat X}$. Action cores (maps $A\diamond X\to X$ that satisfy suitable axioms) were defined and studied in~\cite{HVdL11, HL13}. The main point is that, in the semi-abelian context, from an action core ${}^{\diamond}\xi\colon A\diamond X\to X$ we can recover the action $\xi$. Furthermore, crossed module structures can be expressed in terms of action cores.
\end{rmk}

\begin{ex}
Consider an action $\xi\colon A\flat X\to X$ in $\Grp$, sending each generator $axa^{-1}$ of $A\flat X$ to ${}^{a}x\in X$. In order to understand how the action core ${}^{\diamond}\xi\colon A\diamond X\to X$ looks, we first need to make explicit what the inclusion $i_{A,X}\colon A\diamond X\to A\flat X$ does. It is easy to see that $A\diamond X$ is the subgroup of $A+X$ generated by the commutators, that is the words of the form $axa^{-1}x^{-1}$ with $a\in A$ and $x\in X$. The map $i_{A,X}$ sends a generator $axa^{-1}x^{-1}$ to $\left(axa^{-1}\right)\left(1x^{-1}1^{-1}\right)$. This means that the action core ${}^{\diamond}\xi$ sends an element of the form $axa^{-1}x^{-1}$ to $\xi\bigl(\bigl(axa^{-1}\bigr)\bigl(1x^{-1}1^{-1}\bigr)\bigr)={}^{a}xx^{-1}$.
\end{ex}

Our last ingredient is the definition of an internal crossed module in a semi-abelian category $\A$. Internal crossed modules are equivalent to internal categories; the conditions that make this happen were obtained in~\cite{Jan03}. In order to have a description which is as simple as possible, we require that $\A$ satisfies the condition \SH: further details on this definition (and on its general version which does not require \SH) can be found in~\cite{Jan03,HVdL11,MFVdL12}. Let us just add here that the crossed module conditions may be expressed in terms of action cores, and that when \SH\ does not hold, this approach involves an extra condition in terms of the ternary cosmash product.

\begin{defi}
\label{defi:internal crossed modules}
In a semi-abelian category $\A$ with \SH, an \emph{internal crossed module} is a pair $(X\xrightarrow{\partial}A,\xi)$ where $\partial\colon X\to A$ is a morphism in $\A$ and $\xi\colon A\flat X\to X$ is an internal action such that the diagram 
\[
\xymatrix{
X\flat X \ar[d]_-{\chi_X} \ar[r]^-{\partial\flat 1_X} \ar@{}[rd]|-{*_1} & A\flat X \ar@{}[rd]|-{*_2}\ar[d]^-{\xi} \ar[r]^-{1_A\flat\partial} & A\flat A \ar[d]^-{\chi_A} \\
X \ar@{=}[r] & X\ar[r]_-{\partial} & A
}
\]
commutes. $*_1$ is the \emph{Peiffer condition}, and $*_2$ the \emph{precrossed module condition}.
\end{defi}

\section{Compatible actions of groups}\label{Section groups}

\begin{defi}
\label{defi:coproduct action in grp}
Consider two groups $M$ and $N$ acting on each other via 
\begin{align*}
\xi^M_N\colon M\flat N\to N && \xi^N_M\colon N\flat M\to M
\end{align*}
and on themselves by conjugation. We are able to define induced actions $\xi^{M+N}_M$ and $\xi^{M+N}_N$ of the coproduct $M+N$ on $M$ and on $N$, in such a way that the following diagrams commute:
\begin{align}
\label{diag:coproduct action in grp is induced by the four actions}
\vcenter{
\xymatrix{
M\flat N \ar[r]^-{i_M\flat 1_N} \ar[rd]_-{\xi^M_N} & (M+N)\flat N \ar[d]^-{\xi^{M+N}_N}\\
& N}}
&&
\vcenter{
\xymatrix{
N\flat M \ar[r]^-{i_N\flat 1_M} \ar[rd]_-{\xi^N_M} & (M+N)\flat M \ar[d]^-{\xi^{M+N}_M}\\
& M}}
\\
\label{diag:coproduct action in grp is induced by the four actions'}
\vcenter{\xymatrix{
N\flat N \ar[r]^-{i_N\flat 1_N} \ar[rd]_-{\chi_N} & (M+N)\flat N \ar[d]^-{\xi^{M+N}_N}\\
& N
}}
&&
\vcenter{\xymatrix{
M\flat M \ar[r]^-{i_M\flat 1_M} \ar[rd]_-{\chi_M} & (M+N)\flat M \ar[d]^-{\xi^{M+N}_M}\\
& M
}}
\end{align}
This is done by defining the action $\xi^{M+N}_M\colon (M+N)\flat M\to M$ on the generators $s\overline{m}s^{-1}$ where $\overline{m}\in M$ and $s\in M+N$, inductively on the length of $s$:
\begin{equation}
\label{eq:defi of internal coproduct action in grp - CA}
\xi^{M+N}_M(s\overline{m}s^{-1})=
\begin{cases}
\overline{m} & \text{if $s$ is the empty word,}\\
\xi^{M+N}_M(s'\xi^N_M(n\overline{m}n^{-1})s'^{-1}) & \text{if $s=s'n$ with $n\in N$,}\\
\xi^{M+N}_M(s'\chi_M(m\overline{m}m^{-1})s'^{-1}) & \text{if $s=s'm$ with $m\in M$}\\
\end{cases}
\end{equation}
and similarly for $\xi^{M+N}_N$.
\end{defi}

\begin{rmk}
In particular we have that the equalities
\begin{align}
\label{eq:property of coproduct action in groups 1}
{}^{({}^{n}m)}m'&={({}^{n}m)}m'{({}^{n}m)}^{-1}={}^{n}(m{({}^{n^{-1}}m')}m^{-1})={}^{nmn^{-1}}m' \\
\label{eq:property of coproduct action in groups 2}
{}^{({}^{m}n)}n'&={({}^{m}n)}n'{({}^{m}n)}^{-1}={}^{m}(n{({}^{m^{-1}}n')}n^{-1})={}^{mnm^{-1}}n',
\end{align}
where the right-hand sides are given by the induced action of the coproduct, always hold. Diagrammatically this is expressed by the commutativity of
\begin{align}
\label{diag:property of coproduct action in grp}
\vcenter{
\xymatrixcolsep{3pc}
\xymatrix{
(N\flat M)\flat M \ar[r]^-{k_{N,M}\flat 1_M} \ar[d]_-{\xi^N_M\flat 1_M} & (M+N)\flat M \ar[d]^-{\xi^{M+N}_M}\\
M\flat M \ar[r]_-{\chi_M} & M,
}
}
&&
\vcenter{
\xymatrixcolsep{3pc}
\xymatrix{
(M\flat N)\flat N \ar[r]^-{k_{M,N}\flat 1_N} \ar[d]_-{\xi^M_N\flat 1_N} & (M+N)\flat N \ar[d]^-{\xi^{M+N}_N}\\
N\flat N \ar[r]_-{\chi_N} & N.
}
}
\end{align}
\end{rmk}

\begin{defi}
\label{defi:compatible actions in grp}
Two actions are said to be \emph{compatible} if also the equalities
\begin{equation}
\label{eq:equations for compatible actions in groups}
{}^{({}^{m}n)}m'={}^{mnm^{-1}}m' \qquad \text{and} \qquad {}^{({}^{n}m)}n'={}^{nmn^{-1}}n'	
\end{equation}
hold for each $m$, $m'\in M$ and $n$, $n'\in N$. If once again we examine these equalities from a diagrammatic point of view, then we see that they are equivalent to the commutativity of the diagrams
\begin{align}
\label{diag:axioms for compatible actions in grp}
\vcenter{
\xymatrixcolsep{3pc}
\xymatrix{
(M\flat N)\flat M \ar[r]^-{k_{M,N}\flat 1_M} \ar[d]_-{\xi^M_N\flat 1_M} & (M+N)\flat M \ar[d]^-{\xi^{M+N}_M}\\
N\flat M \ar[r]_-{\xi^N_M} & M,
}
}
&&
\vcenter{
\xymatrixcolsep{3pc}
\xymatrix{
(N\flat M)\flat N \ar[r]^-{k_{N,M}\flat 1_N} \ar[d]_-{\xi^N_M\flat 1_N} & (M+N)\flat N \ar[d]^-{\xi^{M+N}_N}\\
M\flat N \ar[r]_-{\xi^M_N} & N.
}
}
\end{align}
\end{defi}

A second look at these four equalities leads us to the following remark. 

\begin{rmk}
\label{rmk:from compatibility conditions in grp to Peiffer as a quotient}
The meaning of~\eqref{eq:property of coproduct action in groups 1} and~\eqref{eq:property of coproduct action in groups 2} is that for each $m\in M$ and $n\in N$
\begin{itemize}
 \item $({}^{n}m)nm^{-1}n^{-1}$ acts trivially on $M$,
 \item $({}^{m}n)mn^{-1}m^{-1}$ acts trivially on $N$;
\end{itemize}
whereas the meaning~\eqref{eq:equations for compatible actions in groups} is that for each $m\in M$ and $n\in N$
\begin{itemize}
 \item $({}^{n}m)nm^{-1}n^{-1}$ acts trivially on $N$;
 \item $({}^{m}n)mn^{-1}m^{-1}$ acts trivially on $M$.
\end{itemize}
If we define $K\leq M+N$ to be the normal closure of the subgroup generated by the elements of the form $({}^{n}m)nm^{-1}n^{-1}$ or $({}^{m}n)mn^{-1}m^{-1}$, we have that $K$ acts trivially on both $M$ and $N$ if and only if the two actions are compatible. 
\end{rmk}

The previous remark leads to the following definition given in~\cite{GH87}.

\begin{defi}
Given a pair of compatible actions as above, we define their \emph{Peiffer product} $M\bowtie N$ of $M$ and $N$ as the quotient
\[
\xymatrix{
K \ar@{{ |>}->}[r] & M+N \ar@{-{ >>}}[r]^-{q_K} & \frac{M+N}{K}\eqcolon M\bowtie N.
}
\]
\end{defi}

\begin{rmk}
\label{rmk:q_K=q}
Notice that the map $q_K$ and the Peiffer product $M\bowtie N$ can equivalently be defined as the coequaliser in the diagram
\begin{align}
\label{diag:defi of q in grp}
\xymatrixcolsep{4pc}
\xymatrix{
(N\flat M)+(M\flat N) \ar@<2pt>[r]^-{\binom{k_{N,M}}{k_{M,N}}} \ar@<-2pt>[r]_-{\xi^N_M+\xi^M_N} & M+N \ar@{-{ >>}}[r]^-{q} & M\bowtie N.
}
\end{align}
In order to explain why this definition is equivalent to the previous one, consider the map $q_K$ given by the first definition. It is easy to show that 
\[
\begin{cases}
q_K\circ i_M\circ \xi^N_M = q_K\circ k_{N,M}\\
q_K\circ i_N\circ \xi^M_N = q_K\circ k_{M,N}
\end{cases}
\]
since this is exactly what taking the quotient by $K$ means. But this is the same as saying
\[
\begin{cases}
q_K\circ (\xi^N_M+\xi^M_N) \circ i_{N\flat M} = q_K\circ k_{N,M}\\
q_K\circ (\xi^N_M+\xi^M_N) \circ i_{M\flat N} = q_K\circ k_{M,N}
\end{cases}
\]
which in turn is $q_K\circ (\xi^N_M+\xi^M_N)=q_K\circ \binom{k_{N,M}}{k_{M,N}}$. The universal property of the coequaliser is given by the universal property of the quotient by $K$ in a straightforward manner.
\end{rmk}

Since $K$ acts trivially on both $M$ and $N$ we can define induced actions $\xi^{M\bowtie N}_M$ and $\xi^{M\bowtie N}_N$ of $M \bowtie N$ on $M$ and $N$. They are such that the diagrams
\begin{align}
\label{diag:diagram involving action of coproduct and action of Peiffer product}
\vcenter{
\xymatrix{
(M+N)\flat M \ar[rd]_-{\xi^{M+N}_M} \ar[r]^-{q\flat 1_M} & (M\bowtie N)\flat M \ar[d]^-{\xi^{M\bowtie N}_M}\\
& M
}
}
&&
\vcenter{
\xymatrix{
(M+N)\flat N \ar[rd]_-{\xi^{M+N}_N} \ar[r]^-{q\flat 1_N} & (M\bowtie N)\flat N \ar[d]^-{\xi^{M\bowtie N}_N}\\
& M
}
}
\end{align}
commute. We can describe these actions of the Peiffer product through its universal property, but in order to do so, we need a preliminary remark.

\begin{rmk}
\label{rmk:the maps coequalised by the Peiffer product form a reflexive graph in grp}
The diagram
\[
\xymatrixcolsep{8pc}
\xymatrix{
(N\flat M)+(M\flat N) \ar@<1.5ex>[r]^-{\binom{k_{N,M}}{k_{M,N}}} \ar@<-1.5ex>[r]_-{\xi^N_M+\xi^M_N} & M+N \ar[l]|-{\eta^N_M+\eta^M_N}
}
\]
is a reflexive graph. Indeed, the composites $\binom{k_{N,M}}{k_{M,N}}\circ \eta^N_M+\eta^M_N$ and $\xi^N_M+\xi^M_N\circ \eta^N_M+\eta^M_N$ are equal to $1_{M+N}$: one is obvious and the other one is clear once we draw the diagram involved.
\end{rmk}

Lemma~\ref{lemma:-bX preserves coequalisers of reflexive graphs} implies that $q\flat 1_M$ is the coequaliser of $\binom{k_{N,M}}{k_{M,N}}\flat 1_M$ and $(\xi^N_M+\xi^M_N)\flat 1_M$ and that $q\flat 1_N$ is the coequaliser of $\binom{k_{N,M}}{k_{M,N}}\flat 1_N$ and $(\xi^N_M+\xi^M_N)\flat 1_N$. We want to use these universal properties to define induced actions $\xi^{M\bowtie N}_M$ and $\xi^{M\bowtie N}_N$ of $M \bowtie N$ on $M$ and $N$ as in Figure~\ref{Figure-Groups}. 
\begin{figure}
\begin{align*}
\xymatrixcolsep{3.5pc}
\xymatrix{
((N\flat M)+(M\flat N))\flat M \ar@<2pt>[r]^-{\binom{k_{N,M}}{k_{M,N}}\flat 1_M} \ar@<-2pt>[r]_-{\left(\xi^N_M+\xi^M_N\right)\flat 1_M} & (M+N)\flat M \ar[rd]_-{\xi^{M+N}_M} \ar[r]^-{q\flat 1_M} & (M\bowtie N)\flat M \ar@{.>}[d]^-{\xi^{M\bowtie N}_M}\\
& & M
}
\\
\xymatrixcolsep{3.5pc}
\xymatrix{
((N\flat M)+(M\flat N))\flat N \ar@<2pt>[r]^-{\binom{k_{N,M}}{k_{M,N}}\flat 1_N} \ar@<-2pt>[r]_-{\left(\xi^N_M+\xi^M_N\right)\flat 1_N} & (M+N)\flat N \ar[rd]_-{\xi^{M+N}_N} \ar[r]^-{q\flat 1_N} & (M\bowtie N)\flat N \ar@{.>}[d]^-{\xi^{M\bowtie N}_N}\\
& & N
}
\end{align*}	
\caption{Induced actions of the Peiffer product}\label{Figure-Groups}
\end{figure}
In order to do so, we need the next result.
\begin{prop}
The action $\xi^{M+N}_M$ coequalises $\binom{k_{N,M}}{k_{M,N}}\flat 1_M$ and $(\xi^N_M+\xi^M_N)\flat 1_M$. Similarly, the action $\xi^{M+N}_N$ coequalises $\binom{k_{N,M}}{k_{M,N}}\flat 1_N$ and $(\xi^N_M+\xi^M_N)\flat 1_N$. 
\end{prop}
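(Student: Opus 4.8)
The plan is to verify the coequalising property by a direct computation in $\Grp$, using the explicit description of the functor $\flat$ from Example~\ref{ex:bemolle in Grp}. Write $f\coloneq \binom{k_{N,M}}{k_{M,N}}$ and $g\coloneq \xi^N_M+\xi^M_N$ for the two parallel arrows $(N\flat M)+(M\flat N)\to M+N$; then $f\flat 1_M$ and $g\flat 1_M$ are the restrictions of $f+1_M$ and $g+1_M$ to the subgroup $((N\flat M)+(M\flat N))\flat M$ of $((N\flat M)+(M\flat N))+M$. By Example~\ref{ex:bemolle in Grp} this subgroup is generated by the formal conjugates $s\,\overline m\,s^{-1}$ with $s\in (N\flat M)+(M\flat N)$ and $\overline m\in M$, and $(f\flat 1_M)(s\,\overline m\,s^{-1})=f(s)\,\overline m\,f(s)^{-1}$, similarly for $g$. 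Since $\xi^{M+N}_M\circ(f\flat 1_M)$ and $\xi^{M+N}_M\circ(g\flat 1_M)$ are group homomorphisms into $M$, it therefore suffices to prove
\[
\xi^{M+N}_M\bigl(f(s)\,\overline m\,f(s)^{-1}\bigr)=\xi^{M+N}_M\bigl(g(s)\,\overline m\,g(s)^{-1}\bigr)
\]
for all such $s$ and $\overline m$.

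Next I would record that, by the inductive definition~\eqref{eq:defi of internal coproduct action in grp - CA} together with~\eqref{diag:coproduct action in grp is induced by the four actions}--\eqref{diag:coproduct action in grp is induced by the four actions'}, the coproduct action is encoded by the unique group homomorphism $\rho\colon M+N\to\mathrm{Aut}(M)$ restricting to conjugation $\chi_M$ on $M$ and to $\xi^N_M$ on $N$, in the sense that $\xi^{M+N}_M(w\,\overline m\,w^{-1})=\rho(w)(\overline m)$. Hence $\rho\circ f$ and $\rho\circ g$ are both group homomorphisms $(N\flat M)+(M\flat N)\to\mathrm{Aut}(M)$, and the displayed identity says precisely $\rho\circ f=\rho\circ g$. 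As $(N\flat M)+(M\flat N)$ is a free product, it is enough to compare the two on the generators $nmn^{-1}$ of $N\flat M$ and $mnm^{-1}$ of $M\flat N$ (with $m\in M$, $n\in N$). For a generator of the first kind $f(nmn^{-1})=nmn^{-1}$ in $M+N$ whereas $g(nmn^{-1})=\xi^N_M(nmn^{-1})={}^{n}m\in M$; for one of the second kind $f(mnm^{-1})=mnm^{-1}$ whereas $g(mnm^{-1})=\xi^M_N(mnm^{-1})={}^{m}n\in N$. Applying $\rho$ and evaluating at an arbitrary $m'\in M$, the two cases become
\[
{}^{nmn^{-1}}m'={}^{({}^{n}m)}m'
\qquad\text{and}\qquad
{}^{mnm^{-1}}m'={}^{({}^{m}n)}m',
\]
which are respectively~\eqref{eq:property of coproduct action in groups 1} and the first of the compatibility conditions~\eqref{eq:equations for compatible actions in groups}. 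The assertion about $\xi^{M+N}_N$ is proved symmetrically, interchanging the roles of $M$ and $N$ (so $\rho$ becomes the homomorphism $M+N\to\mathrm{Aut}(N)$ determined by $\xi^M_N$ on $M$ and $\chi_N$ on $N$), and invoking~\eqref{eq:property of coproduct action in groups 2} on the generators $mnm^{-1}$ and the second condition in~\eqref{eq:equations for compatible actions in groups} on the generators $nmn^{-1}$.

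There is no genuine obstacle here; the point I would emphasise is where the hypotheses enter. The reduction above turns the coequalising property into four elementary identities in the free product $M+N$: two of them (coming from~\eqref{eq:property of coproduct action in groups 1}--\eqref{eq:property of coproduct action in groups 2}) are built into the very construction of $\xi^{M+N}_M$ and $\xi^{M+N}_N$ in Definition~\ref{defi:coproduct action in grp} and hold for any pair of actions, while the remaining two are exactly the conditions of Definition~\ref{defi:compatible actions in grp}. So compatibility is precisely what makes this Proposition true. Alternatively, and more briefly, one can argue through Remark~\ref{rmk:from compatibility conditions in grp to Peiffer as a quotient}: for every $w$ the elements $f(w)$ and $g(w)$ of $M+N$ differ by an element of the normal subgroup $K$, and since the actions are compatible $K$ acts trivially on both $M$ and $N$, whence $\rho(f(w))=\rho(g(w))$ at once.
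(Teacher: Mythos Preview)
Your proof is correct and rests on the same elementary identities as the paper's, but the packaging is different and somewhat cleaner. The paper argues by an explicit induction on the length of the word $s=s_1\cdots s_k$ in $(N\flat M)+(M\flat N)$, peeling off one generator $s_k$ at a time and using the inductive definition~\eqref{eq:defi of internal coproduct action in grp - CA} of $\xi^{M+N}_M$ at each step. Your reformulation via the homomorphism $\rho\colon M+N\to\mathrm{Aut}(M)$ absorbs that induction into a single observation: since $\rho\circ f$ and $\rho\circ g$ are group homomorphisms out of $(N\flat M)+(M\flat N)$, equality need only be checked on a generating set. What the paper's longer computation buys is that it stays entirely inside the internal-action language (no appeal to $\mathrm{Aut}(M)$), which is more in keeping with the categorical viewpoint developed later; what your version buys is brevity and a transparent separation of the four cases into the two automatic identities~\eqref{eq:property of coproduct action in groups 1}--\eqref{eq:property of coproduct action in groups 2} and the two genuine compatibility conditions~\eqref{eq:equations for compatible actions in groups}. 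Your closing alternative via Remark~\ref{rmk:from compatibility conditions in grp to Peiffer as a quotient} is also valid and is perhaps the shortest route of all.
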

\begin{proof}
Consider a generator $s\overline{m}s^{-1}$ of $((N\flat M)+(M\flat N))\flat M$ and write $s$ as juxtaposition of generators of $N\flat M$ and $M\flat N$, that is $s=s_1\cdots s_k$ with $s_j=n_jm_jn_j^{-1}\in N\flat M$ or $s_j=m_jn_jm_j^{-1}\in M\flat N$. We are going to prove the equality
\[
\xi^{M+N}_M\bigl(\bigl(\binom{k_{N,M}}{k_{M,N}}\flat 1_M\bigr)(s\overline{m}s^{-1})\bigr)=\xi^{M+N}_M\left(\left(\left(\xi^N_M+\xi^M_N\right)\flat 1_M\right)(s\overline{m}s^{-1})\right)
\]
by induction on $k$. First of all, notice that it is equivalent to the equality
\begin{equation}
\label{eq:the internal coproduct action coequalises things in grp}
\xi^{M+N}_M\left(s\overline{m}s^{-1}\right)=\xi^{M+N}_M\left(\epsilon(s)\overline{m}\epsilon(s)^{-1}\right)
\end{equation}
where $\epsilon(s)\coloneq (\xi^N_M+\xi^M_N)(s)\in M+N$.
In order to prove it when $s$ is the empty word, it suffices to notice that also $\epsilon(s)$ is the empty word. Now suppose we proved~\eqref{eq:the internal coproduct action coequalises things in grp} for each word whose decomposition involves at most $k-1$ generators of $N\flat M$ and $M\flat N$, consider $s=s_1\cdots s_k$ and denote $s'=s_1\cdots s_{k-1}$:
we have the chain of equalities
\begin{align*}
\xi^{M+N}_M\bigl(s\overline{m}s^{-1}\bigr)&=\xi^{M+N}_M\bigl(s's_k\overline{m}s_k^{-1}s'^{-1}\bigr)
=\xi^{M+N}_M\bigl(s'\bigl({}^{s_k}\overline{m}\bigr)s'^{-1}\bigr)\\
&=\xi^{M+N}_M\bigl(s'\bigl({}^{\epsilon(s_k)}\overline{m}\bigr)s'^{-1}\bigr)
=\xi^{M+N}_M\bigl(\epsilon(s')\bigl({}^{\epsilon(s_k)}\overline{m}\bigr)\epsilon(s')^{-1}\bigr)\\
&=\xi^{M+N}_M\bigl(\epsilon(s')\epsilon(s_k)\overline{m}\epsilon(s_k)^{-1}\epsilon(s')^{-1}\bigr)
=\xi^{M+N}_M\bigl(\epsilon(s)\overline{m}\epsilon(s)^{-1}\bigr)
\end{align*}
where
\[
\epsilon(s_k)=
\begin{cases}
{}^{n_k}m_k & \text{if $s_k=n_km_kn_k^{-1}\in N\flat M$,}\\
{}^{m_k}n_k & \text{if $s_k=m_kn_km_k^{-1}\in M\flat N$.}
\end{cases} 
\]
Finally we apply the same reasoning to $\xi^{M+N}_N$.
\end{proof}

\begin{prop}
\label{prop:Peiffer has two crossed module structures in grp}
We have two crossed module structures
\begin{align*}
(M\xrightarrow{l_M}M\bowtie N,\xi^{M\bowtie N}_M) && (N\xrightarrow{l_N}M\bowtie N,\xi^{M\bowtie N}_N)
\end{align*}
where the actions of the Peiffer product are induced as above and the maps $l_M$ and~$l_N$ are defined through
\begin{align}
\label{diag:defi of inclusions in Peiffer product}
\vcenter{
\xymatrix{
M \ar@{^{(}->}[rd]^-{i_M} \ar@/_/[rdd]_-{l_M} && N \ar@{_{(}->}[ld]_-{i_N} \ar@/^/[ldd]^-{l_N}\\
 & M+N \ar@{-{ >>}}[d]^-{q}\\
 & M\bowtie N
}
}
\end{align}
\end{prop}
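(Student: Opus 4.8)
The plan is to verify, for each of the two candidate structures, the two conditions of Definition~\ref{defi:internal crossed modules}: the precrossed module condition $*_2$ and the Peiffer condition $*_1$. Since $M$ and $N$ play symmetric roles in all of the constructions above, it is enough to treat $(M\xrightarrow{l_M}M\bowtie N,\xi^{M\bowtie N}_M)$ in detail, the statement for $N$ following by interchanging the two objects. As a preliminary step I would confirm that $\xi^{M\bowtie N}_M$ is genuinely an internal action: its definition as the factorisation of $\xi^{M+N}_M$ through the coequaliser $q\flat 1_M$ is legitimate by the preceding proposition, and the two monad-algebra axioms for $\xi^{M\bowtie N}_M$ reduce to those for $\xi^{M+N}_M$ by naturality of $\eta$ and $\mu$ in the acting object together with the identity $\xi^{M\bowtie N}_M\circ(q\flat 1_M)=\xi^{M+N}_M$ of~\eqref{diag:diagram involving action of coproduct and action of Peiffer product}.

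The Peiffer condition $*_1$, namely $\chi_M=\xi^{M\bowtie N}_M\circ(l_M\flat 1_M)$, requires no calculation with words. Since $l_M=q\circ i_M$, functoriality of $\flat$ gives $l_M\flat 1_M=(q\flat 1_M)\circ(i_M\flat 1_M)$, so
\[
\xi^{M\bowtie N}_M\circ(l_M\flat 1_M)=\xi^{M\bowtie N}_M\circ(q\flat 1_M)\circ(i_M\flat 1_M)=\xi^{M+N}_M\circ(i_M\flat 1_M)=\chi_M,
\]
where the second equality is~\eqref{diag:diagram involving action of coproduct and action of Peiffer product} and the third is the right-hand diagram of~\eqref{diag:coproduct action in grp is induced by the four actions'}.

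For the precrossed module condition $*_2$, namely $l_M\circ\xi^{M\bowtie N}_M=\chi_{M\bowtie N}\circ(1_{M\bowtie N}\flat l_M)$, I would precompose with the regular epimorphism $q\flat 1_M$ (Lemma~\ref{lemma:regular epis are bemolle-stable}), so that it suffices to compare both sides after this precomposition. On the left this gives $l_M\circ\xi^{M+N}_M=q\circ i_M\circ\xi^{M+N}_M$, using~\eqref{diag:diagram involving action of coproduct and action of Peiffer product}; on the right, functoriality of $\flat$ yields $(1_{M\bowtie N}\flat l_M)\circ(q\flat 1_M)=q\flat l_M=(q\flat q)\circ(1_{M+N}\flat i_M)$, and naturality of conjugation, $\chi_{M\bowtie N}\circ(q\flat q)=q\circ\chi_{M+N}$, turns the right-hand side into $q\circ\chi_{M+N}\circ(1_{M+N}\flat i_M)$. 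Thus the condition amounts to
\[
q\circ i_M\circ\xi^{M+N}_M=q\circ\chi_{M+N}\circ(1_{M+N}\flat i_M)\colon (M+N)\flat M\to M\bowtie N,
\]
and, both sides being group homomorphisms, it is enough to check it on the generators $s\overline{m}s^{-1}$ of $(M+N)\flat M$ (with $s\in M+N$, $\overline{m}\in M$), where it reads $q(i_M(\xi^{M+N}_M(s\overline{m}s^{-1})))=q(s\,i_M(\overline{m})\,s^{-1})$. I would prove this by induction on the length of $s$ via the recursion~\eqref{eq:defi of internal coproduct action in grp - CA}: the empty word is trivial; if $s=s'n$ with $n\in N$ one rewrites $\xi^{M+N}_M$ using $\xi^N_M$, applies the inductive hypothesis to $s'$ and the element $\xi^N_M(n\overline{m}n^{-1})\in M$, and uses the coequaliser relation $q\circ i_M\circ\xi^N_M=q\circ k_{N,M}$ recorded in Remark~\ref{rmk:q_K=q}; if $s=s'm$ with $m\in M$ one rewrites $\xi^{M+N}_M$ using $\chi_M$, applies the inductive hypothesis to $s'$ and $\chi_M(m\overline{m}m^{-1})=m\overline{m}m^{-1}\in M$, and uses only that $i_M$ is a homomorphism.

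The main obstacle is precisely this induction: it is where compatibility of the actions enters—equivalently, where one uses, through the defining relations of $q$, that $K=\ker q$ acts trivially on $M$, in order to promote the \lq\lq formal\rq\rq\ action $\xi^{M+N}_M$ to genuine conjugation inside $M\bowtie N$; everything else is a matter of assembling diagrams already established. Once $*_1$ and $*_2$ hold for $(M\xrightarrow{l_M}M\bowtie N,\xi^{M\bowtie N}_M)$, the symmetric argument gives them for $(N\xrightarrow{l_N}M\bowtie N,\xi^{M\bowtie N}_N)$, and the maps $l_M$, $l_N$ are by construction those of~\eqref{diag:defi of inclusions in Peiffer product}.
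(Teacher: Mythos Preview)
Your proposal is correct and follows essentially the same approach as the paper: the Peiffer condition is handled by the identical chain of equalities, and the precrossed module condition is reduced, after precomposing with the epimorphism $q\flat 1_M$, to the equation $q\circ i_M\circ\xi^{M+N}_M=q\circ\chi_{M+N}\circ(1_{M+N}\flat i_M)$, which is then proved by induction on the length of~$s$ using the defining relation $q\circ i_M\circ\xi^N_M=q\circ k_{N,M}$ of the Peiffer product. The only cosmetic difference is that you strip letters from the right of $s$ (following the recursion~\eqref{eq:defi of internal coproduct action in grp - CA} directly), whereas the paper strips from the left via the action identity ${}^{xs'}\overline{m}={}^{x}({}^{s'}\overline{m})$; both inductions are equally valid.
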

\begin{proof}
We will show the claim only for $\xi^{M\bowtie N}_M$, since the proof in the other case uses the same strategy. We need to show the commutativity of the following squares
\[
\xymatrixcolsep{3pc}
\xymatrix{
M\flat M \ar[r]^-{\chi_M} \ar[d]_-{l_M\flat 1_M} & M \ar@{=}[d]\\
(M\bowtie N)\flat M \ar[r]^-{\xi^{M\bowtie N}_M} \ar[d]_-{1_{M\bowtie N}\flat l_M} & M \ar[d]^-{l_M}\\
(M\bowtie N)\flat (M\bowtie N) \ar[r]_-{\chi_{M\bowtie N}} & (M\bowtie N)
}
\]
For the commutativity of the upper square we have the chain of equalities
\begin{align*}
\xi^{M\bowtie N}_M \circ (l_M\flat 1_M)&=\xi^{M\bowtie N}_M \circ (q\flat 1_M)\circ (i_M\flat 1_M)
=\xi^{M+N}_M \circ (i_M\flat 1_M)
=\chi_M
\end{align*}
given by commutativity of diagrams~\eqref{diag:property of coproduct action in grp} and~\eqref{diag:diagram involving action of coproduct and action of Peiffer product}.

For what concerns the lower square, it can be shown to be commutative by direct calculations, using the explicit definition of the coproduct action given in~\eqref{eq:defi of internal coproduct action in grp - CA}. First of all we can precompose with the regular epimorphism $q\flat 1_M$: this entails that the required commutativity is equivalent to the equation 
\begin{equation}
\label{eq:equivalent condition for precrossed module condition for Peiffer in grp}
q\circ \chi_{M+N}\circ (1_{M+N}\flat i_M)=q\circ i_M\circ \xi^{M+N}_M. 
\end{equation}
Now we can take a word $s\in M+N$, an element $\overline{m}\in M$ and prove by induction on the length of $s$ that the general element $s\overline{m}s^{-1}\in (M+N)\flat M$ is sent by the two maps in~\eqref{eq:equivalent condition for precrossed module condition for Peiffer in grp} to
\begin{equation}
\label{eq:second equivalent condition for precrossed module condition for Peiffer in grp}
q\left({}^s\overline{m}\right)=q(s\overline{m}s^{-1}).
\end{equation}
Let us first show this equality for $s$ with length $0$, that is the empty word: we have that ${}^s\overline{m}=\overline{m}=s\overline{m}s^{-1}$ and hence~\eqref{eq:second equivalent condition for precrossed module condition for Peiffer in grp}. For the induction step we are going to use the equality
$q\left({}^n\overline{m}\right)=q(n\overline{m}n^{-1})$ coming from the definition of the Peiffer product. Suppose that~\eqref{eq:second equivalent condition for precrossed module condition for Peiffer in grp} holds for words $s$ with length $l(s)<k$. Given $s$ with length $k$ we can write it as $s=xs'$ with $x=m\in M$ or $x=n\in N$ and $l(s')=k-1$: now we have the chain of equalities
\begin{align*}
q\bigl({}^s\overline{m}\bigr)&=q\bigl({}^{xs'}\overline{m}\bigr)=q\bigl({}^{x}\bigl({}^{s'}\overline{m}\bigr)\bigr)
=q\bigl(x\bigl({}^{s'}\overline{m}\bigr)x^{-1}\bigr)=q(x)q\bigl({}^{s'}\overline{m}\bigr)q\bigl(x^{-1}\bigr)\\
&=q(x)q\bigl(s'\overline{m}s'^{-1}\bigr)q\bigl(x^{-1}\bigr)
=q\bigl(xs'\overline{m}s'^{-1}x^{-1}\bigr)
=q\bigl(s\overline{m}s^{-1}\bigr).
\end{align*}
We conclude that $(M\xrightarrow{l_M}M\bowtie N,\xi^{M\bowtie N}_M)$ and $(N\xrightarrow{l_N}M\bowtie N,\xi^{M\bowtie N}_N)$ are crossed modules.
\end{proof}

Furthermore we know that the actions $\xi^M_N$ and $\xi^N_M$ are in turn induced by $\xi^{M\bowtie N}_M$ and $\xi^{M\bowtie N}_N$ through the maps $l_M$ and $l_N$, that is
\begin{align*}
\xymatrix{
M\flat N \ar[r]^-{l_M\flat 1_N} \ar[rd]_-{\xi^M_N} & (M\bowtie N)\flat N \ar[d]^-{\xi^{M\bowtie N}_N}\\
& N
}
&&
\xymatrix{
N\flat M \ar[r]^-{l_N\flat 1_M} \ar[rd]_-{\xi^N_M} & (M\bowtie N)\flat M \ar[d]^-{\xi^{M\bowtie N}_M}\\
& M
}
\end{align*}
commute. This can be proved by using diagrams~\eqref{diag:coproduct action in grp is induced by the four actions}, \eqref{diag:coproduct action in grp is induced by the four actions'}, \eqref{diag:diagram involving action of coproduct and action of Peiffer product} and~\eqref{diag:defi of inclusions in Peiffer product}.

\begin{prop}[Remark~2.16 in~\cite{BL84}]
\label{prop:remark 2.16 in BL}
Two actions as above are compatible if and only if there exists a group $L$ and two crossed module structure $(M\xrightarrow{\mu}L,\xi^L_M)$ and $(N\xrightarrow{\nu}L,\xi^L_N)$ such that the actions of $M$ on $N$ and the action of $N$ on $M$ are induced from $L$ and its actions.
\end{prop}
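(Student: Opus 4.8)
\emph{Proof proposal.}

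The plan is to treat the two implications separately. For the forward implication there is essentially nothing left to do: if $\xi^M_N$ and $\xi^N_M$ are compatible, then Proposition~\ref{prop:Peiffer has two crossed module structures in grp} already produces the required group, namely $L\coloneq M\bowtie N$, together with the crossed module structures $(M\xrightarrow{l_M}M\bowtie N,\xi^{M\bowtie N}_M)$ and $(N\xrightarrow{l_N}M\bowtie N,\xi^{M\bowtie N}_N)$; and the discussion immediately following that proposition records precisely that $\xi^M_N$ and $\xi^N_M$ are recovered as the actions induced along $l_M$ and $l_N$. So for this direction one simply assembles these two facts.

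For the converse I would argue with elements. Spelling out the hypothesis, ``induced from $L$'' means that, writing ${}^{\ell}(-)$ for the given actions of $L$ on $M$ and on $N$, we have ${}^{n}m={}^{\nu(n)}m$ and ${}^{m}n={}^{\mu(m)}n$ for all $m\in M$ and $n\in N$. I then verify the first equation ${}^{({}^{m}n)}m'={}^{mnm^{-1}}m'$ of Definition~\ref{defi:compatible actions in grp}. On the left, ${}^{({}^{m}n)}m'={}^{\nu({}^{\mu(m)}n)}m'$, and the precrossed module condition $*_2$ for $(N\xrightarrow{\nu}L,\xi^L_N)$ turns $\nu({}^{\mu(m)}n)$ into $\mu(m)\,\nu(n)\,\mu(m)^{-1}$, so the left-hand side equals ${}^{\mu(m)\nu(n)\mu(m)^{-1}}m'$. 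On the right, expanding the coproduct action $\xi^{M+N}_M$ on the word $mnm^{-1}$ one letter at a time according to~\eqref{eq:defi of internal coproduct action in grp - CA} gives $m\cdot{}^{n}(m^{-1}m'm)\cdot m^{-1}=m\cdot{}^{\nu(n)}(m^{-1}m'm)\cdot m^{-1}$; the Peiffer condition $*_1$ for $(M\xrightarrow{\mu}L,\xi^L_M)$ rewrites conjugation by $m$ inside $M$ as the $L$-action ${}^{\mu(m)}(-)$, and then, since ${}^{(-)}(-)$ is a genuine group action of $L$ on $M$, this collapses to ${}^{\mu(m)\nu(n)\mu(m)^{-1}}m'$ as well. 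Hence the two sides agree; the second compatibility equation follows by swapping the roles of $M$ and $N$.

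I do not anticipate a genuine obstacle here. The only step that needs care is the bookkeeping in the converse: peeling the word $mnm^{-1}$ apart via~\eqref{eq:defi of internal coproduct action in grp - CA}, and keeping straight which conjugations live in $M$, which in $N$, and which in $L$, so that the crossed module axioms $*_1$ and $*_2$ get applied to the correct arrows. It is also worth pinning down at the outset that ``induced'' in the statement means exactly $\xi^N_M=\xi^L_M\circ(\nu\flat 1_M)$ and $\xi^M_N=\xi^L_N\circ(\mu\flat 1_N)$; this is the convention under which the two halves of the argument dovetail, and it is the same one used when passing from the Peiffer product back to the original pair. (The diagrammatic, element-free version of this computation is what will be needed for the semi-abelian generalisation announced in the introduction.)
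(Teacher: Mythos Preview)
Your proposal is correct and follows essentially the same route as the paper: the forward implication is dispatched via Proposition~\ref{prop:Peiffer has two crossed module structures in grp}, and the converse is the same element-level computation using the precrossed module condition for $(N\xrightarrow{\nu}L)$ and the Peiffer condition for $(M\xrightarrow{\mu}L)$. The only cosmetic difference is that you reduce both sides separately to the common expression ${}^{\mu(m)\nu(n)\mu(m)^{-1}}m'$, whereas the paper writes a single chain of equalities from left to right.
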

\begin{proof}
$(\Leftarrow)$ We first show that the actions $\xi^M_N\coloneq \xi^L_N\circ(\mu\flat 1_N)$ and $\xi^N_M\coloneq \xi^L_M\circ(\nu\flat 1_M)$ are compatible. To see that they are actually actions it suffices to use Remark~\ref{rmk:xi(f flat 1) is an action}. In order to prove~\eqref{eq:equations for compatible actions in groups}---we will show only one of the two equalities, since the proof of the other follows the same steps---we are going to use the commutative diagrams induced from the crossed module structures involving $L$, that is
\begin{align*}
\xymatrix{
M\flat M \ar[r]^-{\chi_M} \ar[d]_-{\mu\flat 1_M} & M \ar@{=}[d]\\
L\flat M \ar[r]^-{\xi^L_M} \ar[d]_-{1_L\flat\mu} & M \ar[d]^-{\mu}\\
L\flat L \ar[r]_-{\chi_L} & L
}
&&
\xymatrix{
N\flat N \ar[r]^-{\chi_N} \ar[d]_-{\nu\flat 1_N} & N \ar@{=}[d]\\
L\flat N \ar[r]^-{\xi^L_N} \ar[d]_-{1_L\flat\nu} & N \ar[d]^-{\nu}\\
L\flat L \ar[r]_-{\chi_L} & L
}
\end{align*}
This gives us the chain of equalities
\begin{align*}
{}^{({}^{m}n)}m'&={}^{\nu({}^{\mu(m)}n)}m'={}^{\mu(m)\nu(n)\mu(m^{-1})}m'
={}^{\mu(m)}{}^{\nu(n)}\bigl({}^{\mu(m^{-1})}m'\bigr)\\
&={}^{\mu(m)}{}^{\nu(n)}\bigl({}^{m^{-1}}m'\bigr)
={}^{\mu(m)}\bigl({}^{\nu(n)}\bigl({}^{m^{-1}}m'\bigr)\bigr)
={}^{\mu(m)}\bigl({}^{n}\bigl({}^{m^{-1}}m'\bigr)\bigr)\\
&={}^{\mu(m)}\bigl({}^{nm^{-1}}m'\bigr)={}^{m}\bigl({}^{nm^{-1}}m'\bigr)
={}^{mnm^{-1}}m'.
\end{align*}

$(\Rightarrow)$ This implication is given by Proposition~\ref{prop:Peiffer has two crossed module structures in grp}.
\end{proof}

\section{Compatible actions in semi-abelian categories}\label{Section Semiabelian}
From now on we will consider $\A$ to be a semi-abelian category in which the condition \SH\ holds.

We are going to give a definition of compatible internal actions which is inspired by~\ref{defi:coproduct action in grp} and~\ref{defi:compatible actions in grp}, with some differences that we will explain here.

\begin{defi}
\label{defi:compatible internal actions}
Consider two objects $M$, $N\in\A$ which act on each other and on themselves by conjugation and denote the actions as
\begin{align*}
\chi_M&\colon M\flat M\to M & \chi_N&\colon N\flat N\to N\\
\xi^M_N&\colon M\flat N\to N & \xi^N_M&\colon N\flat M\to M.
\end{align*}
We say that the actions $\xi^M_N$ and $\xi^N_M$ are \emph{compatible} if there exist two actions 
\begin{align*}
\xi^{M+N}_N\colon (M+N)\flat N\to N && \xi^{M+N}_M\colon (M+N)\flat M\to M
\end{align*}
\lq\lq induced\rq\rq\ from $\xi^M_N$, $\xi^N_M$ and the conjugations, that is such that the diagrams~\eqref{diag:coproduct internal action is (uniquely) induced from the four actions} in Figure~\ref{Figure-CA0} as well as the diagrams~\eqref{diag:compatible internal actions axiom for M} and~\eqref{diag:compatible internal actions axiom for N} in Figure~\ref{Figure-CA.M-CA.N} commute.

\begin{figure}
\begin{align}
\label{diag:coproduct internal action is (uniquely) induced from the four actions}
&\vcenter{\xymatrix{
N\flat M \ar[r]^-{i_N\flat 1_M} \ar[rd]_-{\xi^N_M} & (M+N)\flat M \ar[d]^-{\xi^{M+N}_M}\\
& M
}}
&&
\vcenter{\xymatrix{
M\flat N \ar[r]^-{i_M\flat 1_N} \ar[rd]_-{\xi^M_N} & (M+N)\flat N \ar[d]^-{\xi^{M+N}_N}\\
& N
}}\nonumber
\\
&\vcenter{\xymatrix{
M\flat M \ar[r]^-{i_M\flat 1_M} \ar[rd]_-{\chi_M} & (M+N)\flat M \ar[d]^-{\xi^{M+N}_M}\\
& M
}}
&&
\vcenter{\xymatrix{
N\flat N \ar[r]^-{i_N\flat 1_N} \ar[rd]_-{\chi_N} & (M+N)\flat N \ar[d]^-{\xi^{M+N}_N}\\
& N
}}\tag{CA.0}\\
&\vcenter{\xymatrix{
M\diamond N\diamond M \ar[r]^-{j_{M,N,M}} \ar[d]_-{S^{N,M}_{1,2}} & (M+N)\flat M \ar[d]^-{\xi^{M+N}_M}\\
N\diamond M \ar[r]_-{{}^{\diamond}\xi^N_M} & M
}}
&&
\vcenter{\xymatrix{
M\diamond N\diamond N \ar[r]^-{j_{M,N,N}} \ar[d]_-{S^{M,N}_{1,2}} & (M+N)\flat N \ar[d]^-{\xi^{M+N}_N}\\
M\diamond N \ar[r]_-{{}^{\diamond}\xi^M_N} & N
}}\nonumber
\end{align}
\caption{The diagrams~\eqref{diag:coproduct internal action is (uniquely) induced from the four actions}}\label{Figure-CA0}
\end{figure}
\begin{figure}
\begin{align}\tag{CA.M}
\label{diag:compatible internal actions axiom for M}
\vcenter{
\xymatrixcolsep{6pc}
\xymatrix{
((N\flat M)+(M\flat N))\flat M \ar[r]^-{(\xi^N_M+\xi^M_N)\flat 1_M} \ar[d]_-{\binom{k_{N,M}}{k_{M,N}}\flat 1_M} & (M+N)\flat M \ar[d]^-{\xi^{M+N}_M}\\
(M+N)\flat M \ar[r]_-{\xi^{M+N}_M} & M
}
}\\
\tag{CA.N}
\label{diag:compatible internal actions axiom for N}
\vcenter{
\xymatrixcolsep{6pc}
\xymatrix{
((N\flat M)+(M\flat N))\flat N \ar[r]^-{(\xi^N_M+\xi^M_N)\flat 1_N} \ar[d]_-{\binom{k_{N,M}}{k_{M,N}}\flat 1_N} & (M+N)\flat N \ar[d]^-{\xi^{M+N}_N}\\
(M+N)\flat N \ar[r]_-{\xi^{M+N}_N} & N
}
}
\end{align}	
\caption{The diagrams~\eqref{diag:compatible internal actions axiom for M} and~\eqref{diag:compatible internal actions axiom for N}}\label{Figure-CA.M-CA.N}
\end{figure}
\end{defi}

This definition obviously implies the one given in the case of groups, but we will see later (Corollary~\ref{cor:in grp the internal defi of compatible action is the same as the particular one}) that in $\Grp$ the two definitions coincide.
The difference between these two definitions is twofold.
\begin{itemize}
 \item First of all, the commutativity of the two squares in~\eqref{diag:coproduct internal action is (uniquely) induced from the four actions} involving the ternary cosmash products is for free in $\Grp$ (and in $\LieAlg$ as one can see from~\cite{DiM19}). Right now it is not clear to us what are the conditions the category $\A$ must satisfy for the commutativity of these squares to be implied by the other four triangles in~\eqref{diag:coproduct internal action is (uniquely) induced from the four actions}. Note that this is quite similar to the ternary cosmash product conditions that appear in the description of internal crossed modules given in~\cite{HVdL11}.
 \item Likewise, note the difference between diagrams~\eqref{diag:compatible internal actions axiom for M} and~\eqref{diag:compatible internal actions axiom for N}, and their version for groups given by~\eqref{diag:axioms for compatible actions in grp}. The former two can be decomposed into the latter, together with~\eqref{diag:property of coproduct action in grp} and with additional conditions involving~$\flat$ and higher-order cosmash products. Also this aspect would benefit from further investigation.
\end{itemize}

\begin{rmk}
\label{rmk:uniqueness of the coproduct actions}
Notice that in the situation of the previous definition, the coproduct actions $\xi^{M+N}_M$ and $\xi^{M+N}_N$ are uniquely determined by the commutativities of~\eqref{diag:coproduct internal action is (uniquely) induced from the four actions} due to Lemma~\ref{lemma:covering the bemolle of a coproduct with some cosmash products}.
\end{rmk}

\begin{prop}
\label{prop:coterminal crossed module induce compatible actions}
Given a pair of coterminal crossed modules
\begin{align*}
(M\xrightarrow{\mu}L,\xi^L_M) && (N\xrightarrow{\nu}L,\xi^L_N)
\end{align*}
we can define actions $\xi^M_N$ and $\xi^N_M$ through the diagrams
\[
\xymatrix{
M\flat N \ar[rr]^-{\xi^M_N} \ar[rd]_-{\mu\flat 1_N} && N && N\flat M \ar[rr]^-{\xi^N_M} \ar[rd]_-{\nu\flat 1_M} && M\\
& L\flat N \ar[ru]_-{\xi^L_N} &&&& L\flat M \ar[ru]_-{\xi^L_M}
}
\]
These actions are then compatible in the sense of Definition~\ref{defi:compatible internal actions}.
\end{prop}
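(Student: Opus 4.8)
The plan is to build the two required coproduct actions as \emph{pullback actions}, in the sense of Remark~\ref{rmk:xi(f flat 1) is an action}, of the $L$-actions $\xi^L_M$ and $\xi^L_N$ along the copairing $\binom{\mu}{\nu}\colon M+N\to L$: that is, I would set $\xi^{M+N}_M\coloneq \xi^L_M\circ(\binom{\mu}{\nu}\flat 1_M)$ and $\xi^{M+N}_N\coloneq \xi^L_N\circ(\binom{\mu}{\nu}\flat 1_N)$, both of which are internal actions by Remark~\ref{rmk:xi(f flat 1) is an action}. What then remains is to verify the eight diagrams required by Definition~\ref{defi:compatible internal actions}; note that by Remark~\ref{rmk:uniqueness of the coproduct actions} the six diagrams~\eqref{diag:coproduct internal action is (uniquely) induced from the four actions} already determine these two actions uniquely, so only their existence is at stake here.

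The four triangles of~\eqref{diag:coproduct internal action is (uniquely) induced from the four actions} that do not mention ternary cosmash products are immediate from $\binom{\mu}{\nu}\circ i_N=\nu$ and $\binom{\mu}{\nu}\circ i_M=\mu$ together with functoriality of $\flat$: this yields $\xi^{M+N}_M\circ(i_N\flat 1_M)=\xi^L_M\circ(\nu\flat 1_M)=\xi^N_M$ and, symmetrically, $\xi^{M+N}_N\circ(i_M\flat 1_N)=\xi^M_N$, while $\xi^{M+N}_M\circ(i_M\flat 1_M)=\xi^L_M\circ(\mu\flat 1_M)=\chi_M$ by the Peiffer condition $*_1$ of the crossed module $(M\xrightarrow{\mu}L,\xi^L_M)$, and likewise $\xi^{M+N}_N\circ(i_N\flat 1_N)=\chi_N$. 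For the diagrams~\eqref{diag:compatible internal actions axiom for M} and~\eqref{diag:compatible internal actions axiom for N}, functoriality of $\flat$ reduces~\eqref{diag:compatible internal actions axiom for M} to the single identity $\binom{\mu}{\nu}\circ\binom{k_{N,M}}{k_{M,N}}=\binom{\mu}{\nu}\circ(\xi^N_M+\xi^M_N)$ of maps $(N\flat M)+(M\flat N)\to L$, which I would check componentwise: on the summand $N\flat M$ the left-hand side is $\binom{\nu}{\mu}\circ k_{N,M}$ and the right-hand side is $\mu\circ\xi^N_M=\mu\circ\xi^L_M\circ(\nu\flat 1_M)$, and both equal $\chi_L\circ(\nu\flat\mu)$ by the precrossed module condition $*_2$ of $(M\xrightarrow{\mu}L,\xi^L_M)$, naturality of $k$, and the identity $\chi_L=\binom{1_L}{1_L}\circ k_{L,L}$; the summand $M\flat N$, and the diagram~\eqref{diag:compatible internal actions axiom for N}, are symmetric.

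The subtle point is the commutativity of the two squares of~\eqref{diag:coproduct internal action is (uniquely) induced from the four actions} that involve the ternary cosmash products. My plan is to factor $\binom{\mu}{\nu}=\binom{1_L}{1_L}\circ(\mu+\nu)$ and, using naturality of $j_{-,-,-}$ in its first two variables (which one gets by precomposing with the monomorphism $k_{(L+L),M}$ and invoking naturality of $h$ and of $k$, together with the defining relation $k_{(M+N),M}\circ j_{M,N,M}=h_{M,N,M}$), to compute $(\binom{\mu}{\nu}\flat 1_M)\circ j_{M,N,M}=(\binom{1_L}{1_L}\flat 1_M)\circ j_{L,L,M}\circ(\mu\diamond\nu\diamond 1_M)$, which by the special case $A=B$ of the diagram defining $j_{A,B,C}$ equals $i_{L,M}\circ S^{L,M}_{2,1}\circ(\mu\diamond\nu\diamond 1_M)$. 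Combined with ${}^{\diamond}\xi^N_M={}^{\diamond}\xi^L_M\circ(\nu\diamond 1_M)$ — naturality of the cosmash inclusion $i_{-,M}$ — this turns the left-hand square into the single equation ${}^{\diamond}\xi^L_M\circ S^{L,M}_{2,1}\circ(\mu\diamond\nu\diamond 1_M)={}^{\diamond}\xi^L_M\circ(\nu\diamond 1_M)\circ S^{N,M}_{1,2}$, and the right-hand square into its symmetric counterpart.

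I expect this last equation to be the main obstacle. The two composites $S^{L,M}_{2,1}\circ(\mu\diamond\nu\diamond 1_M)$ and $(\nu\diamond 1_M)\circ S^{N,M}_{1,2}$ need not coincide before postcomposition with the action core, so one must exploit the crossed module conditions for $\xi^L_M$ — equivalently, the description of internal crossed modules under \SH\ in terms of action cores from~\cite{HVdL11,MFVdL12}, together with the folding identities of~\cite{HVdL11} linking cosmash products of different arities — to see that ${}^{\diamond}\xi^L_M$ identifies the two composites; it may be convenient to test the equality after precomposing with the regular epimorphism covering $(M+N)\flat M$ supplied by Lemma~\ref{lemma:covering the bemolle of a coproduct with some cosmash products}. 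Once all eight diagrams are verified, Remark~\ref{rmk:uniqueness of the coproduct actions} confirms that $\xi^{M+N}_M$ and $\xi^{M+N}_N$ are the (unique) coproduct actions witnessing the compatibility of $\xi^M_N$ and $\xi^N_M$.
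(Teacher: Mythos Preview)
Your approach matches the paper's: define $\xi^{M+N}_M$ and $\xi^{M+N}_N$ as pullback actions along $\binom{\mu}{\nu}$, verify the four triangles and~\eqref{diag:compatible internal actions axiom for M},~\eqref{diag:compatible internal actions axiom for N} by functoriality of $\flat$ together with the Peiffer and precrossed-module conditions, and reduce the ternary cosmash squares to the action-core identity you wrote down. For that last step the paper does not go via Lemma~\ref{lemma:covering the bemolle of a coproduct with some cosmash products}; instead it factors $\mu\diamond\nu\diamond 1_M=(\mu\diamond 1_L\diamond 1_M)\circ(1_M\diamond\nu\diamond 1_M)$, uses naturality of the folding map to rewrite $(\nu\diamond 1_M)\circ S^{N,M}_{1,2}$ as $S^{L,M}_{1,2}\circ(1_M\diamond\nu\diamond 1_M)$, and then invokes the action-core characterisation of crossed modules (Theorem~5.6 of~\cite{HVdL11}), which gives ${}^{\diamond}\xi^L_M\circ S^{L,M}_{1,2}={}^{\diamond}\xi^L_M\circ S^{L,M}_{2,1}\circ(\mu\diamond 1_L\diamond 1_M)$ on $M\diamond L\diamond M$ directly.
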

\begin{proof}
First of all, notice that $\xi^M_N$ and $\xi^N_M$ are actually actions due to Remark~\ref{rmk:xi(f flat 1) is an action}. Now, in order to show that they are compatible, we need to define the coproduct actions
\begin{align*}
\xi^{M+N}_N\colon (M+N)\flat N\to N && \xi^{M+N}_M\colon (M+N)\flat M\to M
\end{align*}
such that diagrams~\eqref{diag:coproduct internal action is (uniquely) induced from the four actions}, \eqref{diag:compatible internal actions axiom for M} and~\eqref{diag:compatible internal actions axiom for N} commute. These are defined as the compositions
\begin{align*}
\xymatrix{
(M+N)\flat M \ar[rr]^-{\xi^{M+N}_M} \ar[rd]_-{\binom{\mu}{\nu}\flat 1_M} && M\\
& L\flat M \ar[ru]_-{\xi^L_M}
}
&&
\xymatrix{
(M+N)\flat N \ar[rr]^-{\xi^{M+N}_N} \ar[rd]_-{\binom{\mu}{\nu}\flat 1_N} && N\\
& L\flat N \ar[ru]_-{\xi^L_N}
}
\end{align*}
Once again the fact that they are actions is given by Remark~\ref{rmk:xi(f flat 1) is an action}.
In order to show that the four triangles in~\eqref{diag:coproduct internal action is (uniquely) induced from the four actions} commute, we simply calculate
\begin{align*}
\xi^{M+N}_M\circ (i_M\flat 1_M)&=\xi^L_M\circ \left(\binom{\mu}{\nu}\flat 1_M\right)\circ (i_M\flat 1_M)=\xi^L_M\circ (\mu\flat 1_M)=\chi_M\\
\xi^{M+N}_M\circ (i_N\flat 1_M)&=\xi^L_M\circ \left(\binom{\mu}{\nu}\flat 1_M\right)\circ (i_N\flat 1_M)=\xi^L_M\circ (\nu\flat 1_M)=\xi^N_M\\
\xi^{M+N}_N\circ (i_M\flat 1_N)&=\xi^L_N\circ \left(\binom{\mu}{\nu}\flat 1_N\right)\circ (i_M\flat 1_N)=\xi^L_N\circ (\mu\flat 1_N)=\xi^M_N\\
\xi^{M+N}_N\circ (i_N\flat 1_N)&=\xi^L_N\circ \left(\binom{\mu}{\nu}\flat 1_N\right)\circ (i_N\flat 1_N)=\xi^L_N\circ (\nu\flat 1_N)=\chi_N
\end{align*}
using the crossed module conditions. For the first square in~\eqref{diag:coproduct internal action is (uniquely) induced from the four actions}, we use the diagrams
\begin{align*}
\xymatrix{
M\diamond N\diamond M \ar[d]_-{\mu\diamond\nu\diamond 1_M} \ar[r]^-{j_{M,N,M}} & (M+N)\flat M \ar[d]^-{(\mu+\nu)\flat 1_M}\\
L\diamond L\diamond M \ar[d]_-{S^{L,M}_{2,1}} \ar[r]^-{j_{L,L,M}} & (L+L)\flat M \ar[d]^-{\binom{1_L}{1_L}\flat 1_M}\\
L\diamond M \ar[r]_-{i_{L,M}} & L\flat M
}
&&
\xymatrix{
M\diamond N\diamond M \ar[d]_-{1_M \diamond\nu\diamond 1_M} \ar[r]^-{S^{N,M}_{1,2}} & N\diamond M \ar[d]_-{\nu\diamond 1_M} \ar[r]^-{{}^{\diamond}\xi^N_M} & M \ar@{=}[d]\\
M\diamond L\diamond M \ar[d]_-{\mu\diamond 1_L\diamond 1_M} \ar[r]_-{S^{L,M}_{1,2}} & L\diamond M \ar[r]_-{{}^{\diamond}\xi^L_M} & M \ar@{=}[d]\\
L\diamond L\diamond M \ar[r]_-{S^{L,M}_{2,1}} & L\diamond M \ar[r]_-{{}^{\diamond}\xi^L_M} & M
}
\end{align*}
induced by naturality and by the crossed module conditions (see Theorem~5.6 in~\cite{HVdL11}), in order to obtain the chain of equalities
\begin{align*}
\xi^{M+N}_M\circ j_{M,N,M} &=\xi^L_M\circ \left(\binom{\mu}{\nu}\flat 1_M\right)\circ j_{M,N,M}
=\xi^L_M\circ i_{L,M}\circ S^{L,M}_{2,1}\circ \mu\diamond\nu\diamond 1_M\\
&={}^{\diamond}\xi^N_M\circ S^{N,M}_{1,2}
\end{align*}
Through a similar reasoning we may prove the commutativity of the other square in~\eqref{diag:coproduct internal action is (uniquely) induced from the four actions}.
Finally, we need to show~\eqref{diag:compatible internal actions axiom for M}, that is the fact that $\xi^{M+N}_M$ coequalises the maps 
\[
\xymatrixcolsep{7pc}
\xymatrix{
((N\flat M)+(M\flat N))\flat M \ar@<2pt>[r]^-{\binom{k_{N,M}}{k_{M,N}}\flat 1_M} \ar@<-2pt>[r]_-{(\xi^N_M+\xi^M_N)\flat 1_M} & (M+N)\flat M.
}
\]
Here we have the chain of equalities
\begin{align*}
\xi^{M+N}_M\circ (\xi^N_M+\xi^M_N)\flat 1_M&= \xi^L_M\circ\binom{\mu}{\nu}\flat 1_M\circ\left(\left(\left(\xi^L_M\circ \nu\flat 1_M\right)+\left(\xi^L_N\circ \mu\flat 1_N\right)\right)\flat 1_M\right)\\
&=\xi^L_M\circ\left(\left(\binom{\mu}{\nu}\circ \left(\left(\xi^L_M\circ \nu\flat 1_M\right)+\left(\xi^L_N\circ \mu\flat 1_N\right)\right)\right)\flat 1_M\right)\\
&= \xi^L_M\circ\binom{\mu\circ\xi^L_M\circ \nu\flat 1_M}{\nu\circ\xi^L_N\circ \mu\flat 1_N}\flat 1_M\\
&= \xi^L_M\circ\binom{\chi_L\circ \nu\flat\mu}{\chi_L\circ \mu\flat\nu}\flat 1_M\\
&= \xi^L_M\circ\binom{\binom{\mu}{\nu}\circ k_{N,M}}{\binom{\mu}{\nu}\circ k_{M,N}}\flat 1_M\\
&= \xi^L_M\circ\left(\binom{\mu}{\nu}\flat 1_M\right)\circ \binom{k_{N,M}}{k_{M,N}}\flat 1_M\\
&= \xi^{M+N}_M\circ\bigl(\binom{k_{N,M}}{k_{M,N}}\flat 1_M\bigr).
\end{align*}
Through a similar reasoning we can show that~\eqref{diag:compatible internal actions axiom for N} commutes.
\end{proof}

We take the construction of the Peiffer product in~\eqref{diag:defi of q in grp} as a general definition.
\begin{defi}
Given two objects $M$ and $N$ acting on each other via $\xi^N_M$ and $\xi^M_N$, we define their \emph{Peiffer product} $M\bowtie N$ as the coequaliser
\begin{align}
\label{diag:defi of q}
\xymatrixcolsep{4pc}
\xymatrix{
(N\flat M)+(M\flat N) \ar@<2pt>[r]^-{\binom{k_{N,M}}{k_{M,N}}} \ar@<-2pt>[r]_-{\xi^N_M+\xi^M_N} & M+N \ar@{-{ >>}}[r]^-{q} & M\bowtie N.
}
\end{align}
\end{defi}

An equivalent definition of the Peiffer product of two actions can be given through the following proposition, which characterises it as the pushout of the two semi-direct products induced by the two actions.

\begin{prop}
\label{prop:Peiffer is a pushout of semidirect products}
Given a pair of actions $\xi^M_N\colon M\flat N\to N$ and $\xi^N_M\colon N\flat M\to M$ we can obtain the Peiffer product $M\bowtie N$ as the pushout
\begin{equation}
\label{diag:Peiffer as a pushout}
\vcenter{
\xymatrix{
M+N \ar[rd]|-{q} \ar@{-{ >>}}[r]^-{\sigma_{\xi^N_M}} \ar@{-{ >>}}[d]_-{\sigma_{\xi^M_N}} & M\rtimes N \ar@{-{ >>}}[d]^-{q_{M\rtimes N}}\\
N\rtimes M \ar@{-{ >>}}[r]_-{q_{N\rtimes M}} & M\bowtie N
}
}
\end{equation}
of the two semi-direct products.
\end{prop}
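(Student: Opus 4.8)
The plan is to prove this by a direct comparison of universal properties, exploiting the general principle that a pushout of two coequalisers is again a coequaliser. The first step is to present both semi-direct products as coequalisers. Recall that, after the canonical identification $N+M\cong M+N$, the morphism $\sigma_{\xi^N_M}\colon M+N\to M\rtimes N$ is the coequaliser of the pair $i_M\circ\xi^N_M$, $k_{N,M}\colon N\flat M\to M+N$, and likewise $\sigma_{\xi^M_N}\colon M+N\to N\rtimes M$ is the coequaliser of $i_N\circ\xi^M_N$, $k_{M,N}\colon M\flat N\to M+N$. Under the same identification, the two parallel arrows of~\eqref{diag:defi of q} are precisely the copairings $\xi^N_M+\xi^M_N=\binom{i_M\circ\xi^N_M}{i_N\circ\xi^M_N}$ and $\binom{k_{N,M}}{k_{M,N}}$, so $M\bowtie N$ is the coequaliser of these two pairs taken jointly over $(N\flat M)+(M\flat N)$.

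Given this, I would next observe that $q$ factors through each semi-direct product. Restricting the defining relation $q\circ(\xi^N_M+\xi^M_N)=q\circ\binom{k_{N,M}}{k_{M,N}}$ to the summand $N\flat M$ yields $q\circ i_M\circ\xi^N_M=q\circ k_{N,M}$, so $q$ coequalises the defining pair of $M\rtimes N$ and hence factors uniquely as $q=q_{M\rtimes N}\circ\sigma_{\xi^N_M}$; restricting to $M\flat N$ gives $q=q_{N\rtimes M}\circ\sigma_{\xi^M_N}$. In particular~\eqref{diag:Peiffer as a pushout} commutes, both composites being equal to $q$. For the universal property, suppose $f\colon M\rtimes N\to Z$ and $g\colon N\rtimes M\to Z$ satisfy $f\circ\sigma_{\xi^N_M}=g\circ\sigma_{\xi^M_N}$, and write $t\colon M+N\to Z$ for this common composite. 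Since $\sigma_{\xi^N_M}$ and $\sigma_{\xi^M_N}$ coequalise their respective defining pairs and $t$ factors through each of them, $t$ coequalises the copairing $\binom{k_{N,M}}{k_{M,N}}$ and $\xi^N_M+\xi^M_N$, hence factors uniquely as $t=h\circ q$ for some $h\colon M\bowtie N\to Z$. Cancelling the regular epimorphism $\sigma_{\xi^N_M}$ in $h\circ q_{M\rtimes N}\circ\sigma_{\xi^N_M}=h\circ q=t=f\circ\sigma_{\xi^N_M}$ gives $h\circ q_{M\rtimes N}=f$, and symmetrically $h\circ q_{N\rtimes M}=g$; uniqueness of $h$ follows because $q$ is an epimorphism. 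This exhibits~\eqref{diag:Peiffer as a pushout} as a pushout.

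The main point requiring care is entirely notational: one must unwind the canonical isomorphism $M+N\cong N+M$ and check that the morphisms $k_{N,M}$ and $i_M$ appearing in the presentation of $M\rtimes N$ as a semi-direct product match the corresponding components of $\binom{k_{N,M}}{k_{M,N}}$ and $\xi^N_M+\xi^M_N$ in~\eqref{diag:defi of q} (and symmetrically for $N\rtimes M$). Once this is set up, the argument is formal and uses nothing beyond the universal property of coequalisers together with the fact that $\sigma_{\xi^N_M}$, $\sigma_{\xi^M_N}$ and $q$ are regular epimorphisms; in particular no further hypotheses on $\A$ enter here.
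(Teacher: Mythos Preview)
Your proposal is correct and follows essentially the same route as the paper's own proof: both present the semi-direct products as coequalisers, observe that $q$ coequalises each defining pair and hence factors through $M\rtimes N$ and $N\rtimes M$, and then verify the pushout universal property by showing that any cocone's common composite $t=f\circ\sigma_{\xi^N_M}=g\circ\sigma_{\xi^M_N}$ coequalises the pair defining $q$, with uniqueness coming from the epimorphicity of $\sigma_{\xi^N_M}$ and $\sigma_{\xi^M_N}$. Your additional remark about the $M+N\cong N+M$ identification is a fair point of bookkeeping, but the paper simply absorbs this into its notation without comment.
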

\begin{proof}
Recall that the semi-direct products are defined as the coequalisers
\begin{align*}
\xymatrixcolsep{4pc}
\xymatrixrowsep{3pc}
\xymatrix{
N\flat M \ar@<4pt>[r]^-{i_M\circ \xi^N_M} \ar@<-1pt>[r]_-{k_{N,M}} & M+N \ar@{-{ >>}}[r]^{\sigma_{\xi^N_M}} & M\rtimes N,
}
\\
\xymatrixcolsep{4pc}
\xymatrixrowsep{3pc}
\xymatrix{
M\flat N \ar@<4pt>[r]^-{ i_N\circ\xi^M_N} \ar@<-1pt>[r]_-{k_{M,N}} & M+N \ar@{-{ >>}}[r]^{\sigma_{\xi^M_N}} & N\rtimes M.
}
\end{align*}
By definition we know that $q$ coequalises each of these pairs of maps, and thus we obtain the unique regular epimorphisms $q_{N\rtimes M}$ and $q_{M\rtimes N}$ making the triangles
\begin{align*}
\xymatrix{
M+N \ar@{-{ >>}}[r]^-{\sigma_{\xi^N_M}} \ar@{-{ >>}}[rd]_-{q} & M\rtimes N \ar@{-{ >>}}[d]^-{q_{M\rtimes N}}\\
& M\bowtie N
}
&&
\xymatrix{
M+N \ar@{-{ >>}}[r]^-{\sigma_{\xi^M_N}} \ar@{-{ >>}}[rd]_-{q} & N\rtimes M \ar@{-{ >>}}[d]^-{q_{N\rtimes M}}\\
& M\bowtie N
}
\end{align*}
commute.
Now in order to prove that~\eqref{diag:Peiffer as a pushout} is a pushout, suppose there exist $f\colon {M\rtimes N\to Z}$ and $g\colon {N\rtimes M\to Z}$ such that $\gamma\coloneq f\circ\sigma_{\xi^N_M}=g\circ\sigma_{\xi^M_N}$. It suffices to prove that $\gamma$ coequalises the maps defining $q$: 
\begin{align*}
\gamma\circ\binom{k_{N,M}}{k_{M,N}}&=\binom{\gamma\circ k_{N,M}}{\gamma\circ k_{M,N}}=\binom{f\circ\sigma_{\xi^N_M}\circ k_{N,M}}{g\circ\sigma_{\xi^M_N}\circ k_{M,N}}
=\binom{f\circ\sigma_{\xi^N_M}\circ i_M\circ\xi^N_M}{g\circ\sigma_{\xi^M_N}\circ i_N\circ\xi^M_N}=\binom{\gamma\circ i_M\circ\xi^N_M}{\gamma\circ i_N\circ\xi^M_N}\\
&=\gamma\circ(\xi^N_M+\xi^M_N).
\end{align*}
This gives us a unique map $\gamma'\colon M\bowtie N\to Z$ such that $\gamma'\circ q_{M\rtimes N}=f$ and $\gamma'\circ q_{N\rtimes M}=g$ because $\sigma_{\xi^N_M}$ and $\sigma_{\xi^M_N}$ are epimorphisms.
\end{proof}

The idea behind the Peiffer product $M\bowtie N$ is that it should be the universal object acting on $M$ and $N$ with two crossed modules structures, as soon as these two objects act on each other compatibly. This is meant to solve the following problem. If we are in the situation of two compatible actions, we have induced coproduct actions whose precrossed module conditions
\begin{align}
\label{diag:precrossed module conditions for coproduct actions}
\vcenter{
\xymatrix{
(M+N)\flat M \ar[d]_-{1_{M+N}\flat i_M} \ar[r]^-{\xi^{M+N}_M} & M \ar[d]^-{i_M}\\
(M+N)\flat (M+N) \ar[r]_-{\chi_{M+N}} & M+N
}
}
&&
\vcenter{
\xymatrix{
(M+N)\flat N \ar[d]_-{1_{M+N}\flat i_N} \ar[r]^-{\xi^{M+N}_N} & N \ar[d]^-{i_N}\\
(M+N)\flat (M+N) \ar[r]_-{\chi_{M+N}} & M+N
}
}
\end{align}
are generally not satisfied. (However, the Peiffer conditions always hold.)

Hence we want to do two things: we want to define actions of the Peiffer product on $M$ and $N$ induced from the coproduct actions, and then we want to show that the postcomposition with the quotient defining the Peiffer product makes the previous squares commute, so that we obtain two crossed module structures.

Again by using Lemma~\ref{lemma:-bX preserves coequalisers of reflexive graphs} and Remark~\ref{rmk:the maps coequalised by the Peiffer product form a reflexive graph in grp} we deduce that in order to define the actions $\xi^{M\bowtie N}_M$ and $\xi^{M\bowtie N}_N$ of the Peiffer product as in Figure~\ref{Figure:Peiffer} (compare with the group case, Figure~\ref{Figure-Groups}) 
\begin{figure}
\begin{align}
\label{diag:defi of Peiffer action on M}
\vcenter{
\xymatrixcolsep{3.5pc}
\xymatrix{
((N\flat M)+(M\flat N))\flat M \ar@<2pt>[r]^-{\binom{k_{N,M}}{k_{M,N}}\flat 1_M} \ar@<-2pt>[r]_-{\left(\xi^N_M+\xi^M_N\right)\flat 1_M} & (M+N)\flat M \ar[rd]_-{\xi^{M+N}_M} \ar[r]^-{q\flat 1_M} & (M\bowtie N)\flat M \ar@{.>}[d]^-{\xi^{M\bowtie N}_M}\\
& & M
}
}
\\
\label{diag:defi of Peiffer action on N}
\vcenter{
\xymatrixcolsep{3.5pc}
\xymatrix{
((N\flat M)+(M\flat N))\flat N \ar@<2pt>[r]^-{\binom{k_{N,M}}{k_{M,N}}\flat 1_N} \ar@<-2pt>[r]_-{\left(\xi^N_M+\xi^M_N\right)\flat 1_N} & (M+N)\flat N \ar[rd]_-{\xi^{M+N}_N} \ar[r]^-{q\flat 1_N} & (M\bowtie N)\flat N \ar@{.>}[d]^-{\xi^{M\bowtie N}_N}\\
& & N
}
}
\end{align}	
\caption{Actions $\xi^{M\bowtie N}_M$ and $\xi^{M\bowtie N}_N$ of the Peiffer product}\label{Figure:Peiffer}
\end{figure}
it suffices to show that $\xi^{M+N}_M$ coequalises the parallel maps in~\eqref{diag:defi of Peiffer action on M} and that $\xi^{M+N}_N$ coequalises the parallel maps in~\eqref{diag:defi of Peiffer action on N}. These conditions are equivalent to the commutativity of~\eqref{diag:compatible internal actions axiom for M} and~\eqref{diag:compatible internal actions axiom for N}.

Now we have the desired actions of the Peiffer product, but in order to obtain the crossed module structures we need to show that postcomposing with the quotient~$q$ makes the diagrams~\eqref{diag:precrossed module conditions for coproduct actions} commute. In fact we are going to prove more than this: the Peiffer product is the coequaliser of those maps!

\begin{defi}
Given a pair of compatible actions $\xi^N_M$ and $\xi^M_N$, we define the \emph{strong Peiffer
 product} $M\bowtie_S N$ as the coequaliser in the diagram
\begin{align}
\label{diag:defi of q_S}
{
\xymatrixcolsep{2.8pc}
\xymatrix{
((M+N)\flat M)+((M+N)\flat N) \ar@<2pt>[rr]^-{\chi_{M+N}\circ \binom{1_{M+N}\flat i_M}{1_{M+N}\flat i_N}} \ar@<-2pt>[rr]_-{\xi^{M+N}_M+\xi^{M+N}_N} && M+N \ar@{-{ >>}}[r]^-{q_S} & M\bowtie_S N.
}
}
\end{align}
\end{defi}

\begin{rmk}
\label{rmk:why q_S and q}
It is important to notice that in principle there is a huge difference between the coequaliser in~\eqref{diag:defi of q} and the one in~\eqref{diag:defi of q_S}:
\begin{itemize}
 \item the latter makes sense only if the two actions are already compatible---otherwise the existence of the coproduct actions is not guaranteed; by definition, the strong Peiffer product coequalises the compositions in~\eqref{diag:precrossed module conditions for coproduct actions};
 \item the former makes sense even when the two actions are not compatible; it is obtained following the ideas from the particular compatibility conditions in the case of $\Grp$ through Remark~\ref{rmk:from compatibility conditions in grp to Peiffer as a quotient} and Remark~\ref{rmk:q_K=q}.
\end{itemize}
This means that by taking~\eqref{diag:defi of q} as a definition of $M\bowtie N$, we would not immediately have that the Peiffer product is the universal way to coequalise the compositions in~\eqref{diag:precrossed module conditions for coproduct actions}. Obviously if we precompose the maps in~\eqref{diag:defi of q_S} with $(i_N\flat 1_M)+(i_M\flat 1_N)$, we see that $q_S$ coequalises also the maps defining $q$
\[
q_S\circ (\xi^N_M+\xi^M_N) = q_S\circ\binom{k_{N,M}}{k_{M,N}} 
\]
but for the converse we need the following proposition.
\end{rmk}

\begin{prop}
\label{prop:q=q_S}
Consider two actions $\xi^M_N$ and $\xi^N_M$ which are compatible in the sense of Definition~\ref{defi:compatible internal actions}. Then the Peiffer product $M\bowtie N$ as in~\eqref{diag:defi of q} and the strong Peiffer product $M\bowtie_S N$ as in~\eqref{diag:defi of q_S} are isomorphic. 
\end{prop}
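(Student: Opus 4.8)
The plan is to exploit that $q$ and $q_S$ are both regular epimorphisms out of $M+N$, so the proposition will follow once we show that each of them coequalises the parallel pair defining the other: the two comparison maps so obtained will automatically be mutually inverse. One direction is already recorded in Remark~\ref{rmk:why q_S and q}, where $q_S$ is seen to coequalise $\binom{k_{N,M}}{k_{M,N}}$ and $\xi^N_M+\xi^M_N$; the universal property of~\eqref{diag:defi of q} then yields a unique $\bar q_S\colon M\bowtie N\to M\bowtie_S N$ with $\bar q_S\circ q=q_S$. The real work is the converse: to show that $q$ coequalises the pair appearing in~\eqref{diag:defi of q_S}. Decomposing the domain $((M+N)\flat M)+((M+N)\flat N)$ and using the symmetry between $M$ and $N$, this reduces to the single identity $q\circ\chi_{M+N}\circ(1_{M+N}\flat i_M)=q\circ i_M\circ\xi^{M+N}_M$ of maps $(M+N)\flat M\to M\bowtie N$, that is, to the commutativity of the precrossed module square~\eqref{diag:precrossed module conditions for coproduct actions} after postcomposition with $q$.

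To establish this identity I would precompose with the regular epimorphism $(M\diamond N\diamond M)+(M\flat M)+(N\flat M)\to(M+N)\flat M$ from Lemma~\ref{lemma:covering the bemolle of a coproduct with some cosmash products} (with $A=M$, $B=N$, $C=M$, so that its components are $j_{M,N,M}$, $i_M\flat 1_M$ and $i_N\flat 1_M$) and check the equality on each summand. On $M\flat M$ both sides collapse to $q\circ i_M\circ\chi_M$: the left one because $\chi_{M+N}\circ(1_{M+N}\flat i_M)\circ(i_M\flat 1_M)=\chi_{M+N}\circ(i_M\flat i_M)=i_M\circ\chi_M$ by functoriality of $\flat$ and naturality of conjugation, the right one by the triangle $\xi^{M+N}_M\circ(i_M\flat 1_M)=\chi_M$ of~\eqref{diag:coproduct internal action is (uniquely) induced from the four actions}. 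On $N\flat M$ the left side equals $q\circ\chi_{M+N}\circ(i_N\flat i_M)$, which by $\chi_{M+N}=\binom{1_{M+N}}{1_{M+N}}\circ k_{(M+N),(M+N)}$ and naturality of $k$ becomes $q\circ k_{N,M}$; and $q\circ k_{N,M}=q\circ i_M\circ\xi^N_M$ is precisely how $q$ is defined in~\eqref{diag:defi of q}, which is also the value of the right side via the triangle $\xi^{M+N}_M\circ(i_N\flat 1_M)=\xi^N_M$.

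The delicate summand is $M\diamond N\diamond M$. On the right, the ternary square of~\eqref{diag:coproduct internal action is (uniquely) induced from the four actions} gives $\xi^{M+N}_M\circ j_{M,N,M}={}^{\diamond}\xi^N_M\circ S^{N,M}_{1,2}$; using ${}^{\diamond}\xi^N_M=\xi^N_M\circ i_{N,M}$, the identity $k_{N,M}\circ i_{N,M}=h_{N,M}$ of Remark~\ref{rmk:cosmash and bemolle}, and once more $q\circ k_{N,M}=q\circ i_M\circ\xi^N_M$, the right side rewrites as $q\circ h_{N,M}\circ S^{N,M}_{1,2}$. On the left, naturality of $k$ together with $k_{(M+N),M}\circ j_{M,N,M}=h_{M,N,M}$ give $\chi_{M+N}\circ(1_{M+N}\flat i_M)\circ j_{M,N,M}=\binom{1_{M+N}}{i_M}\circ h_{M,N,M}$, where $\binom{1_{M+N}}{i_M}\colon(M+N)+M\to M+N$ is the codiagonal folding the two copies of $M$; invoking the diagram defining the folding operation $S^{M,N}_{2,1}$ one recognises $\binom{1_{M+N}}{i_M}\circ h_{M,N,M}$ as $h_{M,N}\circ S^{M,N}_{2,1}$, modulo the symmetry isomorphisms of the cosmash products. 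The hard part will be checking that these two ternary terms genuinely coincide, i.e.\ that the folding operations of~\cite{HVdL11} are compatible with the symmetry isomorphisms $M\diamond N\diamond M\cong N\diamond M\diamond M$ and $M\diamond N\cong N\diamond M$; this is where the bookkeeping concentrates, while everything else is a routine diagram chase.

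Once this is done, $q$ coequalises the pair in~\eqref{diag:defi of q_S}, so the universal property of $q_S$ gives a unique $\bar q\colon M\bowtie_S N\to M\bowtie N$ with $\bar q\circ q_S=q$. Then $\bar q\circ\bar q_S\circ q=\bar q\circ q_S=q$ and $\bar q_S\circ\bar q\circ q_S=\bar q_S\circ q=q_S$, and cancelling the epimorphisms $q$ and $q_S$ yields $\bar q\circ\bar q_S=1_{M\bowtie N}$ and $\bar q_S\circ\bar q=1_{M\bowtie_S N}$, so $\bar q$ and $\bar q_S$ are mutually inverse isomorphisms. (Alternatively one could run the comparison through Proposition~\ref{prop:Peiffer is a pushout of semidirect products}, viewing $q$ as the pushout of the two semi-direct product quotients.)
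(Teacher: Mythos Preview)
Your argument is correct and follows essentially the same route as the paper: reduce to showing that $q$ coequalises the pair defining $q_S$, cover $(M+N)\flat M$ by the three components of Lemma~\ref{lemma:covering the bemolle of a coproduct with some cosmash products}, and check each summand separately using the triangles and the ternary square of~\eqref{diag:coproduct internal action is (uniquely) induced from the four actions} together with the defining relation $q\circ k_{N,M}=q\circ i_M\circ\xi^N_M$.

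The only divergence is that you flag the symmetry bookkeeping on the $M\diamond N\diamond M$ summand as ``the hard part''. In the paper there is nothing to check here: with the conventions of~\cite{HVdL11}, the symbol $S^{N,M}_{1,2}$ \emph{is} the fold $M\diamond N\diamond M\to N\diamond M$ characterised by $h_{N,M}\circ S^{N,M}_{1,2}$ equalling the composite of $h_{M,N,M}$ with the codiagonal $(M+N)+M\to M+N$ identifying the two copies of $M$ (up to the canonical $N+M\cong M+N$). Hence your left-hand computation $\binom{1_{M+N}}{i_M}\circ h_{M,N,M}$ already \emph{is} $h_{N,M}\circ S^{N,M}_{1,2}$, the very expression you obtained on the right; no separate compatibility between $S^{M,N}_{2,1}$ and $S^{N,M}_{1,2}$ via symmetry isomorphisms is required. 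So what you anticipate as delicate is in fact immediate from the definition of the folding operation, and the paper's proof simply writes the equality down.
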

\begin{proof}
In order to obtain the needed isomorphism it suffices to show that $q$ coequalises the maps defining $q_S$: since the converse already holds due to Remark~\ref{rmk:why q_S and q}, we obtain the claim by the universal properties of the coequalisers. Recalling Lemma~\ref{lemma:covering the bemolle of a coproduct with some cosmash products} we just need to show that $q$ coequalises the two compositions in
\[
\xymatrixrowsep{3pc}
\xymatrixcolsep{0pc}
\xymatrix{
(M\flat M)+(N\flat M)+(M\diamond N\diamond M)+(M\flat N)+(N\flat N)+(M\diamond N\diamond N) \ar@{-{ >>}}[d]_-{
\scalebox{0.7}{$
\left\lgroup
\begin{array}{c}
i_M\flat 1_M\\
i_N\flat 1_M\\
j_{M,N,M}
\end{array}
\right\rgroup+\left\lgroup
\begin{array}{c}
i_M\flat 1_N\\
i_N\flat 1_N\\
j_{M,N,N}
\end{array}
\right\rgroup
$}
}\\
((M+N)\flat M)+((M+N)\flat N) \ar@<2pt>[rr]^-{\chi_{M+N}\circ\binom{1_{M+N}\flat i_M}{1_{M+N}\flat i_N}} \ar@<-2pt>[rr]_-{\xi^{M+N}_M+\xi^{M+N}_N} && M+N
}
\]
By the universal property of the coproduct we can consider each component separately and since the last three are similar to the first three---it suffices to exchange $M$ and $N$---we are going to examine only the first three.
\begin{itemize}
\item Precomposing with the inclusion of $M\flat M$, we obtain
\begin{align*}
q\circ\chi_{M+N}\binom{1_{M+N}\flat i_M}{1_{M+N}\flat i_N}\circ i_1 \circ (i_M\flat 1_M)&=q\circ \chi_{M+N}\circ (i_M\flat i_M)\\
&=q\circ i_M\circ \chi_M\\
&=q\circ i_M\circ\xi^{M+N}_M\circ (i_M\flat 1_M)\\
&=q\circ \left(\xi^{M+N}_M+\xi^{M+N}_N\right)\circ i_1\circ (i_M\flat 1_M).
\end{align*}
\item Precomposing with the inclusion of $N\flat M$, and using the definition of $q$ we obtain
\begin{align*}
q\circ\chi_{M+N}\binom{1_{M+N}\flat i_M}{1_{M+N}\flat i_N}\circ i_1 \circ (i_N\flat 1_M)&=q\circ \chi_{M+N}\circ (i_N\flat i_M)\\
&=q\circ k_{N,M}\\
&=q\circ i_M\circ\xi^N_M\\
&=q\circ i_M\circ\xi^{M+N}_M\circ (i_N\flat 1_M)\\
&=q\circ \left(\xi^{M+N}_M+\xi^{M+N}_N\right)\circ i_1\circ (i_N\flat 1_M).
\end{align*}
\item Precomposing with the inclusion of $M\diamond N\diamond M$, we obtain
\begin{align*}
q\circ\chi_{M+N}\binom{1_{M+N}\flat i_M}{1_{M+N}\flat i_N}\circ i_1 \circ j_{M,N,M}&=q\circ \chi_{M+N}\circ (1_{M+N}\flat i_M)\circ j_{M,N,M}\\
&=q\circ h_{N,M}\circ S^{N,M}_{1,2}\\
&=q\circ k_{N,M}\circ i_{N,M}\circ S^{N,M}_{1,2}\\
&=q\circ k_{N,M}\circ i_{N,M}\circ S^{N,M}_{1,2}\\
&=q\circ i_M\circ\xi^N_M\circ i_{N,M}\circ S^{N,M}_{1,2}\\
&=q\circ i_M\circ\xi^{M+N}_M\circ j_{M,N,M}\\
&=q\circ \left(\xi^{M+N}_M+\xi^{M+N}_N\right)\circ i_1\circ j_{M,N,M}.\qedhere
\end{align*}
\end{itemize}
\end{proof}

This means that $M\bowtie_S N\cong M\bowtie N$ and that $q$ is the universal map making~\eqref{diag:precrossed module conditions for coproduct actions} commute through postcomposition.

Our aim now is to show that $\xi^{M\bowtie N}_M$ and $\xi^{M\bowtie N}_N$ are indeed actions, which moreover induce two crossed module structures.

\begin{prop}
\label{prop:crossed module structures of the Peiffer product}
The maps $\xi^{M\bowtie N}_M$ and $\xi^{M\bowtie N}_N$ are internal actions. We have crossed module structures
\begin{align*}
(M\xrightarrow{l_M}M\bowtie N,\xi^{M\bowtie N}_M) && (N\xrightarrow{l_N}M\bowtie N,\xi^{M\bowtie N}_N)
\end{align*}
where the maps $l_M$ and $l_N$ are defined as in~\eqref{diag:defi of inclusions in Peiffer product}. Further, as in Proposition~\ref{prop:coterminal crossed module induce compatible actions}, the compatible actions induced by these crossed module structures coincide with the actions $\xi^N_M$ and $\xi^M_N$.
\end{prop}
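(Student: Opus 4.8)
The plan is to verify the three assertions in turn, each of which reduces to identities already established. Throughout I spell out only the $M$-side; the $N$-side is obtained by exchanging the roles of $M$ and $N$.

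\emph{The maps $\xi^{M\bowtie N}_M$, $\xi^{M\bowtie N}_N$ are internal actions.} Recall that $\xi^{M\bowtie N}_M$ was obtained, just before Figure~\ref{Figure:Peiffer}, as the unique map with $\xi^{M\bowtie N}_M\circ(q\flat 1_M)=\xi^{M+N}_M$, and that $\xi^{M+N}_M$ is itself an internal action by the very definition of compatibility (Definition~\ref{defi:compatible internal actions}). I would invoke the general principle that an action pushes forward along a regular epimorphism in the acting variable: the unit law for $\xi^{M\bowtie N}_M$ follows at once from the naturality identity $\eta^{M\bowtie N}_M=(q\flat 1_M)\circ\eta^{M+N}_M$, while the associativity law follows after precomposing with the regular epimorphism $q\flat(q\flat 1_M)$ (a regular epi by Lemma~\ref{lemma:regular epis are bemolle-stable}), using $\mu^{M\bowtie N}_M\circ\bigl(q\flat(q\flat 1_M)\bigr)=(q\flat 1_M)\circ\mu^{M+N}_M$ together with the bifunctoriality of $\flat$ to reduce it to the associativity law for $\xi^{M+N}_M$. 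Both naturality identities simply express that $q\flat 1_{(-)}$ is a morphism of monads $(M+N)\flat(-)\to(M\bowtie N)\flat(-)$.

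\emph{The crossed module axioms.} Since $l_M=q\circ i_M$ we have $l_M\flat 1_M=(q\flat 1_M)\circ(i_M\flat 1_M)$, so the Peiffer condition is immediate, exactly as in the group case (Proposition~\ref{prop:Peiffer has two crossed module structures in grp}): $\xi^{M\bowtie N}_M\circ(l_M\flat 1_M)=\xi^{M+N}_M\circ(i_M\flat 1_M)=\chi_M$ by the relevant triangle of~\eqref{diag:coproduct internal action is (uniquely) induced from the four actions}. For the precrossed module condition $l_M\circ\xi^{M\bowtie N}_M=\chi_{M\bowtie N}\circ(1_{M\bowtie N}\flat l_M)$ I would precompose both sides with $q\flat 1_M$: the left-hand side becomes $q\circ i_M\circ\xi^{M+N}_M$, and on the right-hand side bifunctoriality of $\flat$ rewrites $(1_{M\bowtie N}\flat l_M)\circ(q\flat 1_M)$ as $(q\flat q)\circ(1_{M+N}\flat i_M)$ while naturality of conjugation rewrites $\chi_{M\bowtie N}\circ(q\flat q)$ as $q\circ\chi_{M+N}$, so the right-hand side becomes $q\circ\chi_{M+N}\circ(1_{M+N}\flat i_M)$. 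These two agree precisely because $q$ makes the diagrams~\eqref{diag:precrossed module conditions for coproduct actions} commute after postcomposition, which is what Proposition~\ref{prop:q=q_S} provides (via the isomorphism $M\bowtie N\cong M\bowtie_S N$ coming from the coequaliser~\eqref{diag:defi of q_S}); since $q\flat 1_M$ is a regular epimorphism we conclude the desired equality.

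\emph{The induced actions coincide.} By the recipe of Proposition~\ref{prop:coterminal crossed module induce compatible actions}, the action of $N$ on $M$ induced by $(M\xrightarrow{l_M}M\bowtie N,\xi^{M\bowtie N}_M)$ is $\xi^{M\bowtie N}_M\circ(l_N\flat 1_M)$, and since $l_N=q\circ i_N$ one checks $\xi^{M\bowtie N}_M\circ(l_N\flat 1_M)=\xi^{M+N}_M\circ(i_N\flat 1_M)=\xi^N_M$, again by~\eqref{diag:coproduct internal action is (uniquely) induced from the four actions}; dually the induced action of $M$ on $N$ is $\xi^M_N$. I expect the main obstacle to be organisational rather than conceptual: the one genuinely new ingredient behind the precrossed module condition --- that $q$ coequalises the pair in~\eqref{diag:defi of q_S}, i.e.\ that the Peiffer and strong Peiffer products agree --- has already been carried out in Proposition~\ref{prop:q=q_S}, so here it only remains to transport that identity across $q\flat 1_M$. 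The most calculation-heavy point is the verification that $\xi^{M\bowtie N}_M$ is an action, where one must keep the two monads $(M+N)\flat(-)$ and $(M\bowtie N)\flat(-)$ carefully apart; but this is a routine instance of pushing an action forward along a regular epimorphism.
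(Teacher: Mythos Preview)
Your proposal is correct and follows essentially the same route as the paper's proof: the action axioms are obtained by transporting the unit and multiplication laws for $\xi^{M+N}_M$ across the regular epimorphisms $q\flat 1_M$ and $q\flat(q\flat 1_M)$ (using Lemma~\ref{lemma:regular epis are bemolle-stable}), the Peiffer condition comes straight from~\eqref{diag:coproduct internal action is (uniquely) induced from the four actions}, the precrossed module condition is obtained by precomposing with $q\flat 1_M$ and invoking Proposition~\ref{prop:q=q_S}, and the final identification of the induced actions is the same computation via $l_N=q\circ i_N$. Your slightly more conceptual packaging (``morphism of monads'', ``pushing an action forward'') is a harmless rephrasing of exactly the diagram chases the paper carries out.
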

\begin{proof}
We are going to prove the claim only for $\xi^{M\bowtie N}_M$ and $l_M$, since the reasoning can be repeated for $\xi^{M\bowtie N}_N$ and $l_N$.

In order to see that $\xi^{M\bowtie N}_M$ is automatically an action it suffices to follow these steps:
\begin{itemize}
\item show that $\eta^{M\bowtie N}_M=(q\flat 1_M)\circ \eta^{M+N}_M$ using the diagram
\[
\xymatrix{
M \ar@{^{(}->}[rd]^-{i_M} \ar[d]_-{\eta^{M+N}_M}\\
(M+N)\flat M \ar@{{ |>}->}[r]^-{k_{M+N,M}} \ar[d]_-{q\flat 1_M} & (M+N)+M \ar[d]^-{q+1_M}\\
(M\bowtie N)\flat M \ar@{{ |>}->}[r]^-{k_{M\bowtie N,M}} & (M\bowtie N)+M;
}
\]
\item show the first axiom:
\[
\xi^{M\bowtie N}_M\circ\eta^{M\bowtie N}_M=\xi^{M\bowtie N}_M\circ(q\flat 1_M)\circ \eta^{M+N}_M=\xi^{M+N}_M\circ \eta^{M+N}_M=1_M;
\]
\item $q\flat(q\flat 1_M)$ is a regular epimorphism due to Lemma~\ref{lemma:regular epis are bemolle-stable};
\item show the second axiom 
\[
\xi^{M\bowtie N}_M\circ\mu^{M\bowtie N}_M=\xi^{M\bowtie N}_M\circ(1_{M\bowtie N}\flat \xi^{M\bowtie N}_M)
\]
using the commutativity of the outer rectangle
\[
\xymatrix{
(M+N)\flat((M+N)\flat M) \ar[r]^-{\mu^{M+N}_M} \ar@{-{ >>}}[d]_-{q\flat(q\flat 1_M)} & (M+N)\flat M \ar@{-{ >>}}[d]^-{q\flat 1_M}\\
(M\bowtie N)\flat((M\bowtie N)\flat M) \ar[r]^-{\xi^{M\bowtie N}_M} \ar[d]_-{1_{M\bowtie N}\flat \xi^{M\bowtie N}_M} & (M\bowtie N)\flat M \ar[d]^-{\xi^{M\bowtie N}_M}\\
(M\bowtie N)\flat M \ar[r]_-{\xi^{M\bowtie N}_M} & M
}
\]
given by the second axiom for the action $\xi^{M+N}_M$.
\end{itemize}
It remains to prove that $(M\xrightarrow{l_M}M\bowtie N,\xi^{M\bowtie N}_M)$ is indeed a crossed module. This amounts to the commutativity of the squares
\[
\xymatrix{
M\flat M \ar[r]^-{\chi_M} \ar[d]_-{l_M\flat 1_M} & M \ar@{=}[d]\\
(M\bowtie N)\flat M \ar[r]^-{\xi^{M\bowtie N}_M} \ar[d]_-{1_{M\bowtie N}\flat l_M} & M \ar[d]^-{l_M}\\
(M\bowtie N)\flat (M\bowtie N) \ar[r]_-{\chi_{M\bowtie N}} & (M\bowtie N).
}
\]
For the upper square we have the chain of equalities
\begin{align*}
\xi^{M\bowtie N}_M \circ (l_M\flat 1_M)&=\xi^{M\bowtie N}_M \circ (q\flat 1_M)\circ (i_M\flat 1_M)
=\xi^{M+N}_M \circ (i_M\flat 1_M)
=\chi_M.
\end{align*}
In order to show that the lower square commutes, consider the diagram
\[
\xymatrix{
(M+N)\flat M \ar@{-{ >>}}[d]_-{q\flat 1_M} \ar@/^/[rd]^-{\xi^{M+N}_M}\\
(M\bowtie N)\flat M \ar[r]^-{\xi^{M\bowtie N}_M} \ar[d]_-{1_{M\bowtie N}\flat l_M} & M \ar[d]^-{l_M}\\
(M\bowtie N)\flat (M\bowtie N) \ar[r]_-{\chi_{M\bowtie N}} & (M\bowtie N).
}
\]
Since $q\flat 1_M$ is a regular epimorphism, it suffices to prove that the outer diagram is commutative. We decompose it as
\[
\xymatrix{
(M+N)\flat M \ar[r]^-{\xi^{M+N}_M} \ar[d]_-{1_{M+N}\flat i_M} & M \ar[d]^-{i_M}\\
(M+N)\flat (M+N) \ar[d]_-{q\flat q} \ar[r]_-{\chi_{M+N}} & M+N \ar[d]^-{q}\\
(M\bowtie N)\flat (M\bowtie N) \ar[r]_-{\chi_{M\bowtie N}} & M\bowtie N\\
}
\]
It is easy to check that the lower square commutes and thanks to this, by using Proposition~\ref{prop:q=q_S}, we find that the whole rectangle commutes.

Finally we know that the actions $\xi^M_N$ and $\xi^N_M$ are in turn induced by $\xi^{M\bowtie N}_M$ and $\xi^{M\bowtie N}_N$ through the maps $l_M$ and $l_N$, that is
\begin{align*}
\vcenter{\xymatrix{
M\flat N \ar[r]^-{l_M\flat 1_N} \ar[rd]_-{\xi^M_N} & (M\bowtie N)\flat N \ar[d]^-{\xi^{M\bowtie N}_N}\\
& N
}}
\qquad\text{and}\qquad
\vcenter{\xymatrix{
N\flat M \ar[r]^-{l_N\flat 1_M} \ar[rd]_-{\xi^N_M} & (M\bowtie N)\flat M \ar[d]^-{\xi^{M\bowtie N}_M}\\
& M
}}
\end{align*}
commute. This can be proved by using the definition of $l_M$ and $l_N$ and the commutativity of diagrams~\eqref{diag:coproduct internal action is (uniquely) induced from the four actions}, \eqref{diag:defi of Peiffer action on M} and~\eqref{diag:defi of Peiffer action on N}.
\end{proof}

\begin{rmk}
Notice that in the previous proposition we are implicitly using the \SH\ condition: indeed we are using Definition~\ref{defi:internal crossed modules}, which requires \SH, as a definition for internal crossed modules.
\end{rmk}

Combining Proposition~\ref{prop:coterminal crossed module induce compatible actions} and Proposition~\ref{prop:crossed module structures of the Peiffer product} we obtain the following characterisation of compatible actions, the main result of this article:

\begin{thm}
\label{thm:compatible actions iff pair of coterminal crossed module}
In a semi-abelian category that satisfies \SH, two actions $\xi^M_N$ and $\xi^N_M$ are compatible if and only if there exists an object $L$ endowed with crossed module structures
\begin{align*}
(M\xrightarrow{\mu}L,\xi^L_M) && (N\xrightarrow{\nu}L,\xi^L_N)
\end{align*}
which via the commutative triangles
\[
\xymatrix{
M\flat N \ar[rr]^-{\xi^M_N} \ar[rd]_-{\mu\flat 1_N} && N && N\flat M \ar[rr]^-{\xi^N_M} \ar[rd]_-{\nu\flat 1_M} && M\\
& L\flat N \ar[ru]_-{\xi^L_N} &&&& L\flat M \ar[ru]_-{\xi^L_M}
}
\]
induce the given actions.\noproof
\end{thm}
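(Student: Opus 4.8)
The plan is to assemble the two implications directly from the two propositions already established in this section, with the Peiffer product serving as the witnessing object $L$ in the forward direction.

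For the ``if'' implication, suppose we are given an object $L$ together with crossed module structures $(M\xrightarrow{\mu}L,\xi^L_M)$ and $(N\xrightarrow{\nu}L,\xi^L_N)$, such that the actions $\xi^M_N$ and $\xi^N_M$ are recovered through the displayed triangles, i.e.\ $\xi^M_N=\xi^L_N\circ(\mu\flat 1_N)$ and $\xi^N_M=\xi^L_M\circ(\nu\flat 1_M)$. This is exactly the hypothesis of Proposition~\ref{prop:coterminal crossed module induce compatible actions}, whose conclusion is that the two actions so obtained are compatible in the sense of Definition~\ref{defi:compatible internal actions}. So this direction requires nothing more than invoking that proposition.

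For the ``only if'' implication, suppose $\xi^M_N$ and $\xi^N_M$ are compatible. I would take $L\coloneq M\bowtie N$, the Peiffer product defined by the coequaliser~\eqref{diag:defi of q}, together with $\mu\coloneq l_M$, $\nu\coloneq l_N$ (the maps of~\eqref{diag:defi of inclusions in Peiffer product}) and the actions $\xi^{M\bowtie N}_M$, $\xi^{M\bowtie N}_N$ of Figure~\ref{Figure:Peiffer}. Proposition~\ref{prop:crossed module structures of the Peiffer product} guarantees that $(M\xrightarrow{l_M}M\bowtie N,\xi^{M\bowtie N}_M)$ and $(N\xrightarrow{l_N}M\bowtie N,\xi^{M\bowtie N}_N)$ are crossed module structures, and, crucially, that the actions induced back along $l_M$ and $l_N$ are precisely $\xi^M_N$ and $\xi^N_M$; that last clause is exactly the commutativity of the two triangles in the statement (with $\mu$, $\nu$ read as $l_M$, $l_N$). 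This closes the equivalence.

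Since all the ingredients are in place, there is no real obstacle remaining; the mathematical content of the theorem is carried by Propositions~\ref{prop:coterminal crossed module induce compatible actions} and~\ref{prop:crossed module structures of the Peiffer product}. The one point deserving care is purely a matter of bookkeeping: one should check that the triangles appearing in the theorem statement coincide, under the matching $\mu\leftrightarrow l_M$ and $\nu\leftrightarrow l_N$, with those produced by Proposition~\ref{prop:crossed module structures of the Peiffer product}, so that the phrase ``induce the given actions'' holds on the nose and not merely up to isomorphism. Given the explicit description of $l_M$ and $l_N$ via the quotient $q$, this identification is immediate, and the proof is complete.
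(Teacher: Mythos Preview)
Your proposal is correct and matches the paper's own argument exactly: the theorem is stated with no separate proof, only the sentence ``Combining Proposition~\ref{prop:coterminal crossed module induce compatible actions} and Proposition~\ref{prop:crossed module structures of the Peiffer product} we obtain the following characterisation of compatible actions''. Your write-up simply spells out which proposition handles which implication and identifies $L=M\bowtie N$ as the witness in the forward direction, which is precisely the intended reading.
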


As a consequence, our definition of compatible internal actions is indeed an extension of the particular definitions for groups and Lie algebras.

\begin{cor}
\label{cor:in grp the internal defi of compatible action is the same as the particular one}
In $\Grp$ Definition~\ref{defi:compatible internal actions} coincides with Definition~\ref{defi:compatible actions in grp}.
\end{cor}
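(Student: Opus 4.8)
The plan is to deduce the equivalence by chaining the two characterisation results already available, using the existence of a pair of coterminal crossed modules over a common base object as the common bridge. One implication is noted right after Definition~\ref{defi:compatible internal actions}: a pair of internal actions in $\Grp$ that is compatible in the sense of that definition is in particular compatible in the sense of Definition~\ref{defi:compatible actions in grp}, since, after precomposition with the evident inclusions and using the triangles~\eqref{diag:coproduct internal action is (uniquely) induced from the four actions} describing the coproduct actions, diagrams~\eqref{diag:compatible internal actions axiom for M} and~\eqref{diag:compatible internal actions axiom for N} restrict to the diagrams~\eqref{diag:axioms for compatible actions in grp}. So it only remains to prove the converse.

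Assume then that $\xi^M_N$ and $\xi^N_M$ are compatible in the sense of Definition~\ref{defi:compatible actions in grp}. By Proposition~\ref{prop:remark 2.16 in BL} there exist a group $L$ and crossed module structures $(M\xrightarrow{\mu}L,\xi^L_M)$ and $(N\xrightarrow{\nu}L,\xi^L_N)$ from which the two given actions are induced. Since $\Grp$ is semi-abelian and satisfies \SH, and since a crossed module of groups is precisely an internal crossed module in $\Grp$ in the sense of Definition~\ref{defi:internal crossed modules} (by the equivalence of~\cite{Jan03}), this data is exactly a pair of coterminal internal crossed modules over $L$ which, via the commutative triangles appearing in the statement of Theorem~\ref{thm:compatible actions iff pair of coterminal crossed module}, induces $\xi^M_N$ and $\xi^N_M$. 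Applying the \lq\lq if\rq\rq\ direction of that theorem now gives that $\xi^M_N$ and $\xi^N_M$ are compatible in the sense of Definition~\ref{defi:compatible internal actions}, which completes the argument.

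The point requiring some care is the translation between classical and internal crossed modules of groups, together with the verification that \lq\lq inducing the actions\rq\rq\ refers to the same condition on both sides; both are routine consequences of the cited equivalence. In particular, the conditions involving ternary cosmash products in~\eqref{diag:coproduct internal action is (uniquely) induced from the four actions} and the strengthened conditions~\eqref{diag:compatible internal actions axiom for M} and~\eqref{diag:compatible internal actions axiom for N}, which have no manifest elementary counterpart in Definition~\ref{defi:compatible actions in grp}, come for free from Theorem~\ref{thm:compatible actions iff pair of coterminal crossed module}. This is precisely why it pays to route the argument through the crossed module characterisation rather than attempting to match the two families of diagrams directly.
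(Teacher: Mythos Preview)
Your argument is correct and follows exactly the route taken in the paper: the corollary is obtained by combining Theorem~\ref{thm:compatible actions iff pair of coterminal crossed module} with Proposition~\ref{prop:remark 2.16 in BL}, using the crossed-module characterisation as the bridge between the two definitions. Your write-up simply unpacks this one-line combination in more detail.
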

\begin{proof}
This is a combination of Theorem~\ref{thm:compatible actions iff pair of coterminal crossed module} with Proposition~\ref{prop:remark 2.16 in BL}.
\end{proof}

\begin{cor}
\label{cor:in LieAlg the internal defi of compatible action is the same as the particular one}
The definition of compatible actions of Lie algebras given in~\cite{Ell91} coincides with Definition~\ref{defi:compatible internal actions} restricted to the category $\LieAlg$.
\end{cor}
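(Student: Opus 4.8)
The plan is to reduce the statement to Theorem~\ref{thm:compatible actions iff pair of coterminal crossed module}, in exactly the way Corollary~\ref{cor:in grp the internal defi of compatible action is the same as the particular one} was reduced to Proposition~\ref{prop:remark 2.16 in BL} in the group case, but now using the Lie-algebra analogue of Proposition~\ref{prop:remark 2.16 in BL}, which was proven by Khmaladze in~\cite{Khm99}.

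First I would recall that in $\LieAlg$ all the categorical ingredients specialise to the classical ones. Since $\LieAlg$ is a category of groups with operations---in particular a semi-abelian category satisfying \SH---internal actions in the sense of~\cite{BJK05} coincide with the Lie algebra actions used by Ellis in~\cite{Ell91}, internal crossed modules in the sense of Definition~\ref{defi:internal crossed modules} coincide with crossed modules of Lie algebras, and the categorical notion of an action induced along a morphism (as in the triangles of Theorem~\ref{thm:compatible actions iff pair of coterminal crossed module}) corresponds to the usual one; see~\cite{Jan03, MFVdL12}. Moreover, as observed in~\cite{DiM19}, inside $\LieAlg$ the two squares of~\eqref{diag:coproduct internal action is (uniquely) induced from the four actions} involving the ternary cosmash products $M\diamond N\diamond M$ and $M\diamond N\diamond N$ commute automatically, so that Definition~\ref{defi:compatible internal actions}, read in $\LieAlg$, unwinds precisely to the element-wise compatibility conditions of~\cite{Ell91}.

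Then I would apply Theorem~\ref{thm:compatible actions iff pair of coterminal crossed module} in the semi-abelian category $\LieAlg$: a pair of actions $\xi^M_N$, $\xi^N_M$ is compatible in the sense of Definition~\ref{defi:compatible internal actions} if and only if there exists a Lie algebra $L$ carrying coterminal crossed module structures $(M\xrightarrow{\mu}L,\xi^L_M)$ and $(N\xrightarrow{\nu}L,\xi^L_N)$ which induce $\xi^M_N$ and $\xi^N_M$. On the other hand, Khmaladze~\cite{Khm99} proves the analogous characterisation for Ellis's notion of compatibility: the same pair of Lie algebra actions is compatible in the sense of~\cite{Ell91} if and only if such a Lie algebra $L$ with two coterminal crossed module structures exists inducing the actions. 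Comparing the two statements, both notions of compatibility are characterised by one and the same condition, hence they coincide.

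The only step that is not purely formal is the first one: one has to check that the categorical constructions---internal action, internal crossed module, and especially the folding and comparison morphisms $S^{N,M}_{1,2}$, $S^{M,N}_{1,2}$ and $j_{M,N,M}$, $j_{M,N,N}$ occurring in~\eqref{diag:coproduct internal action is (uniquely) induced from the four actions}---restrict correctly to $\LieAlg$, and in particular that the extra ternary-cosmash conditions become vacuous there. Since this is precisely what is recorded in~\cite{Jan03, MFVdL12, DiM19}, once these facts are invoked the corollary follows.
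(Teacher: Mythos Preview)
Your core argument is correct and matches the paper's own proof: apply Theorem~\ref{thm:compatible actions iff pair of coterminal crossed module} in $\LieAlg$ and combine it with the Lie-algebra analogue of Proposition~\ref{prop:remark 2.16 in BL}. The paper's proof is a single sentence doing exactly this, citing Theorem~2.17 of~\cite{DiM19} rather than~\cite{Khm99} for the Lie-algebra characterisation; since the result is attributed to both sources elsewhere in the paper, your choice of reference is acceptable, though the precise statement you need is the one recorded as Theorem~2.17 in~\cite{DiM19}.

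One comment on the surrounding discussion: your first and third paragraphs suggest an alternative, more direct route---unwinding Definition~\ref{defi:compatible internal actions} element-wise and checking that the ternary-cosmash squares in~\eqref{diag:coproduct internal action is (uniquely) induced from the four actions} become vacuous in $\LieAlg$. That is \emph{not} needed for the argument, and it is not entirely innocent either: as noted just after Definition~\ref{defi:compatible internal actions}, the squares~\eqref{diag:compatible internal actions axiom for M} and~\eqref{diag:compatible internal actions axiom for N} are a priori stronger than Ellis's element-wise conditions, involving higher-order cosmash terms, so a direct unwinding would require further work. The whole point of going through Theorem~\ref{thm:compatible actions iff pair of coterminal crossed module} is to bypass this. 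Your second paragraph already contains the complete proof; the rest can be dropped.
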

\begin{proof}
This is obtained through Theorem~\ref{thm:compatible actions iff pair of coterminal crossed module} by using Theorem~2.17 from~\cite{DiM19}.
\end{proof}

\section{Universal properties of the Peiffer product}\label{Section Peiffer}
The Peiffer product $M\bowtie N$ is the universal way to associate a coterminal pair of crossed modules to a pair of compatible actions.

\begin{prop}\label{Universal property}
Consider a pair of compatible actions $\xi^M_N$ and $\xi^N_M$ and the pairs of coterminal crossed modules inducing them. The pair given by the Peiffer product is the universal one, in the sense that it is initial: for any pair of crossed modules 
\begin{align*}
(M\xrightarrow{\mu}L,\xi^L_M) && (N\xrightarrow{\nu}L,\xi^L_N)
\end{align*}
inducing $\xi^M_N$ and $\xi^N_M$ there exists a unique morphism $\verticalbinomial{\mu}{\nu}\colon M\bowtie N\to L$ making the diagram
\[
\xymatrix{
& M \ar[d]_-{l_M} \ar@/^/[ddr]^-{\mu}\\
N \ar[r]^-{l_N} \ar@/_/[drr]_-{\nu} & M\bowtie N \ar@{.>}[rd]|-{\verticalbinomial{\mu}{\nu}}\\
&& L
}
\]
commute.
\end{prop}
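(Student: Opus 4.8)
The plan is to realise $\verticalbinomial{\mu}{\nu}$ as the unique factorisation of the copair $\binom{\mu}{\nu}\colon M+N\to L$ through the coequaliser $q$ of~\eqref{diag:defi of q}. So the first step is to verify that $\binom{\mu}{\nu}$ coequalises the pair $\binom{k_{N,M}}{k_{M,N}}$ and $\xi^N_M+\xi^M_N$ that defines $q$. Using the universal property of the coproduct $(N\flat M)+(M\flat N)$, this reduces to the two equalities
\[
\mu\circ\xi^N_M=\binom{\mu}{\nu}\circ k_{N,M}
\qquad\text{and}\qquad
\nu\circ\xi^M_N=\binom{\mu}{\nu}\circ k_{M,N}.
\]

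I would prove the second one as follows. Since the crossed modules are assumed to induce the given actions, $\xi^M_N=\xi^L_N\circ(\mu\flat 1_N)$, so by the precrossed module condition $*_2$ of Definition~\ref{defi:internal crossed modules} applied to $(N\xrightarrow{\nu}L,\xi^L_N)$ together with functoriality of $\flat$,
\[
\nu\circ\xi^M_N=\nu\circ\xi^L_N\circ(\mu\flat 1_N)=\chi_L\circ(1_L\flat\nu)\circ(\mu\flat 1_N)=\chi_L\circ(\mu\flat\nu).
\]
On the other hand, writing $\chi_L=\binom{1_L}{1_L}\circ k_{L,L}$ and $\binom{\mu}{\nu}=\binom{1_L}{1_L}\circ(\mu+\nu)$, naturality of $k$ gives $\binom{\mu}{\nu}\circ k_{M,N}=\binom{1_L}{1_L}\circ k_{L,L}\circ(\mu\flat\nu)=\chi_L\circ(\mu\flat\nu)$, so both sides agree. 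The first equality is entirely symmetric, using the crossed module $(M\xrightarrow{\mu}L,\xi^L_M)$ instead (and the symmetry of the coproduct implicit in the notation $k_{N,M}$).

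Granting this, the universal property of the coequaliser~\eqref{diag:defi of q} produces a unique morphism $\verticalbinomial{\mu}{\nu}\colon M\bowtie N\to L$ with $\verticalbinomial{\mu}{\nu}\circ q=\binom{\mu}{\nu}$. Composing with the coproduct injections and recalling $l_M=q\circ i_M$ and $l_N=q\circ i_N$ from~\eqref{diag:defi of inclusions in Peiffer product}, one gets $\verticalbinomial{\mu}{\nu}\circ l_M=\mu$ and $\verticalbinomial{\mu}{\nu}\circ l_N=\nu$, which is the asserted commutativity. For uniqueness, any $f\colon M\bowtie N\to L$ making the triangle commute satisfies $f\circ q\circ i_M=\mu$ and $f\circ q\circ i_N=\nu$, hence $f\circ q=\binom{\mu}{\nu}$ by the universal property of $M+N$; since $q$ is a regular, in particular ordinary, epimorphism, $f=\verticalbinomial{\mu}{\nu}$.

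I do not expect a serious obstacle here: the only slightly delicate point is the bookkeeping in the second step, namely matching $\binom{\mu}{\nu}\circ k_{M,N}$ with $\chi_L\circ(\mu\flat\nu)$ via the naturality of the bifunctor $\flat$ and of the kernel inclusions $k$, together with the coproduct symmetry. Finally, if one wishes to read ``initial'' in the strict categorical sense, as initiality in a category whose morphisms are maps $L\to L'$ respecting the crossed module data, one should also check that $\verticalbinomial{\mu}{\nu}$ intertwines $\xi^{M\bowtie N}_M$ with $\xi^L_M$ (and likewise on the $N$ side): precomposing with the regular epimorphism $q\flat 1_M$ and using~\eqref{diag:defi of Peiffer action on M} reduces this to $\xi^L_M\circ(\binom{\mu}{\nu}\flat 1_M)=\xi^{M+N}_M$, which is exactly how the coproduct action was built from $L$ in Proposition~\ref{prop:coterminal crossed module induce compatible actions}, these coproduct actions being unique by Remark~\ref{rmk:uniqueness of the coproduct actions}.
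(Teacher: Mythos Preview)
Your proof is correct and follows essentially the same approach as the paper: both factor $\binom{\mu}{\nu}$ through the coequaliser $q$ by checking, component by component, that it coequalises the defining pair, using the precrossed module condition and naturality of $k$ to identify $\nu\circ\xi^M_N$ (resp.\ $\mu\circ\xi^N_M$) with $\binom{\mu}{\nu}\circ k_{M,N}$ (resp.\ $\binom{\mu}{\nu}\circ k_{N,M}$). Your additional paragraph verifying that $\verticalbinomial{\mu}{\nu}$ intertwines the actions goes beyond what the paper proves here, but it is correct and a worthwhile observation.
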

\begin{proof}
It suffices to show that $\binom{\mu}{\nu}\colon M+N\to L$ coequalises the two maps defining $M\bowtie N$. Indeed that would give us a unique map $\verticalbinomial{\mu}{\nu}$ such that
\[
\xymatrix{
M+N \ar[r]^-{q} \ar[rd]_-{\binom{\mu}{\nu}} & M\bowtie N \ar@{.>}[d]^-{\verticalbinomial{\mu}{\nu}}\\
& L
}
\]
and then by precomposing with the inclusion we would get
\begin{align*}
\mu=\binom{\mu}{\nu}\circ i_M=\verticalbinomial{\mu}{\nu}\circ q\circ i_M=\verticalbinomial{\mu}{\nu}\circ l_M,\quad
\nu=\binom{\mu}{\nu}\circ i_N=\verticalbinomial{\mu}{\nu} \circ q\circ i_N=\verticalbinomial{\mu}{\nu}\circ l_N.
\end{align*}
Therefore we have to show that the two compositions
\[
\xymatrixcolsep{3pc}
\xymatrix{
(N\flat M)+(M\flat N) \ar@<2pt>[rr]^-{\binom{k_{N,M}}{k_{M,N}}} \ar@<-2pt>[rr]_-{\xi^N_M+\xi^M_N} && M+N \ar[r]^-{\binom{\mu}{\nu}} & L
}
\]
are equal. This is done via the chain of equalities
\[
\binom{\mu}{\nu}\circ (\xi^N_M+\xi^M_N)= \binom{\mu\circ\xi^L_M\circ \nu\flat 1_M}{\nu\circ\xi^L_N\circ \mu\flat 1_N}=\binom{\binom{\mu}{\nu}\circ k_{N,M}}{\binom{\mu}{\nu}\circ k_{M,N}}=\binom{\mu}{\nu}\circ \binom{k_{N,M}}{k_{M,N}}.\qedhere
\]
\end{proof}

\begin{lemma}
Consider two pairs of coterminal crossed modules
\begin{align*}
\xymatrix{
& M\ar[d]^-{\mu}\\
N \ar[r]_-{\nu}& L
}
&& 
\xymatrix{
& M\ar[d]^-{\mu'}\\
N \ar[r]_-{\nu'}& L'
}
\end{align*}
such that they induce the same actions between $M$ and $N$, that is such that the diagrams 
\begin{align*}
\xymatrix{
N\flat M \ar[rd]|-{\xi^N_M} \ar[r]^-{\nu\flat 1_M} \ar[d]_-{\nu'\flat 1_M} & L\flat M \ar[d]^-{\xi^L_M}\\
L'\flat M \ar[r]_-{\psi'_M} & M
}
&&
\xymatrix{
M\flat N \ar[rd]|-{\xi^M_N} \ar[r]^-{\mu\flat 1_N} \ar[d]_-{\mu'\flat 1_N} & L\flat N \ar[d]^-{\xi^L_N}\\
L'\flat N \ar[r]_-{\psi'_N} & N
}
\end{align*}
commute. Up to isomorphism, they induce the same Peiffer product $M\bowtie N$.
\end{lemma}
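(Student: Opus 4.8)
The plan is to exploit the fact that the Peiffer product $M\bowtie N$, as defined in~\eqref{diag:defi of q}, is manufactured from nothing but the pair of actions $\xi^N_M$ and $\xi^M_N$: it is the coequaliser of the parallel pair
\[
\binom{k_{N,M}}{k_{M,N}},\ \xi^N_M+\xi^M_N\colon (N\flat M)+(M\flat N)\longrightarrow M+N,
\]
whose only further ingredients, the kernel inclusions $k_{N,M}$ and $k_{M,N}$, are canonical. So the real content of the lemma lies in unwinding its hypothesis. The commutativity of the two displayed squares says precisely that
\[
\xi^N_M=\xi^L_M\circ(\nu\flat 1_M)=\psi'_M\circ(\nu'\flat 1_M)
\quad\text{and}\quad
\xi^M_N=\xi^L_N\circ(\mu\flat 1_N)=\psi'_N\circ(\mu'\flat 1_N),
\]
that is, the actions of $N$ on $M$ and of $M$ on $N$ produced (in the sense of Proposition~\ref{prop:coterminal crossed module induce compatible actions}) by the pair $(\mu,\nu)$ over $L$ coincide with those produced by the pair $(\mu',\nu')$ over $L'$. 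Both pairs give rise to compatible actions by Proposition~\ref{prop:coterminal crossed module induce compatible actions}, though~\eqref{diag:defi of q} does not actually need compatibility.

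Granting this, the conclusion is immediate: the two pairs define the coequaliser of one and the same parallel pair, hence by the uniqueness of coequalisers the resulting Peiffer products are isomorphic, and indeed canonically so via the comparison map between the two coequalisers of that diagram. To see moreover that the whole Peiffer \emph{datum} transports along this isomorphism---the inclusions $l_M$, $l_N$ of~\eqref{diag:defi of inclusions in Peiffer product} and the crossed module structures $\xi^{M\bowtie N}_M$, $\xi^{M\bowtie N}_N$ of Proposition~\ref{prop:crossed module structures of the Peiffer product}---I would invoke Remark~\ref{rmk:uniqueness of the coproduct actions}: the coproduct actions $\xi^{M+N}_M$ and $\xi^{M+N}_N$ are themselves uniquely determined by $\xi^N_M$, $\xi^M_N$ and the conjugations $\chi_M$, $\chi_N$, and every piece appearing in Figure~\ref{Figure:Peiffer} and in~\eqref{diag:defi of inclusions in Peiffer product} is built from these and from $q$.

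There is no genuine obstacle here; the one point worth stressing is the structural observation that $M\bowtie N$ depends on a pair of coterminal crossed modules only through the pair of actions it induces, which is exactly what the hypothesis identifies.
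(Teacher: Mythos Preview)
Your argument is correct and follows essentially the same line as the paper's own proof: both rest on the observation that the Peiffer product and its crossed module structures depend only on the induced actions $\xi^N_M$, $\xi^M_N$, and both invoke Remark~\ref{rmk:uniqueness of the coproduct actions} to pin down the coproduct actions uniquely. Your version is marginally more direct in that you read the independence of $M\bowtie N$ straight off the coequaliser~\eqref{diag:defi of q}, whereas the paper first establishes $\xi^{M+N}_M=\xi'^{M+N}_M$ and $\xi^{M+N}_N=\xi'^{M+N}_N$ via the Remark and deduces everything from that; the content is the same.
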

\begin{proof}
The induced actions $\xi^{M+N}_M$ and $\xi'^{M+N}_M$ (resp.\ $\xi^{M+N}_N$ and $\xi'^{M+N}_N$) coincide when restricted to $M\flat M$, $N\flat M$ and $M\diamond N\diamond M$ (resp.\ $M\flat N$, $N\flat N$ and $M\diamond N\diamond N$), therefore it suffices to use Remark~\ref{rmk:uniqueness of the coproduct actions} to obtain that $\xi^{M+N}_M=\xi'^{M+N}_M$ (resp.\ $\xi^{M+N}_N=\xi'^{M+N}_N$). As a consequence they induce isomorphic Peiffer products and isomorphic crossed module structures.
\end{proof}

Finally we use Proposition~\ref{prop:Peiffer is a pushout of semidirect products} to show the link between our definition of Peiffer product and the one given in~\cite{CMM17}.

\begin{rmk}
We know from Proposition~3.2 in~\cite{CMM17} that, as soon as $(M\xrightarrow{\mu}L,\xi^L_M)$ and $(N\xrightarrow{\nu}L,\xi^L_N)$ are (pre)crossed modules, we have induced actions of $L$ on~$M\rtimes N$ and~$N\rtimes M$ with corresponding (pre)crossed module structures. In general this is not true for $M\bowtie N$, but if $\A$ is algebraically coherent, by Proposition~4.1 and Proposition~4.3 in~\cite{CMM17}, and by Proposition~\ref{prop:Peiffer is a pushout of semidirect products} we obtain that our definition of Peiffer product coincides with the one given by Cigoli, Mantovani and Metere: consequently $M\bowtie N$ is endowed with a precrossed module structure ${(\verticalbinomial{\mu}{\nu}\colon M\bowtie N\to L,\psi_{M\bowtie N})}$ as soon as $M$ and $N$ are so. Finally, when $\A$ satisfies the condition (UA) as well (see~\cite{CMM17} for more on this condition), Theorem~5.2 in~\cite{CMM17} tells us that the Peiffer product precrossed module turns out to be a crossed module as soon as $M$ and $N$ are so. Actually then it is the coproduct of $(M\xrightarrow{\mu}L,\xi^L_M)$ and $(N\xrightarrow{\nu}L,\xi^L_N)$ in the category $\XMod_L(\A)$ of $L$-crossed modules in $\A$.	
\end{rmk}

\begin{rmk}
We do not know whether $L$ acts on $M\bowtie N$ when $\A$ is not algebraically coherent. And even if so, it is not clear to us whether this action defines a precrossed module structure.
\end{rmk}


\end{document}